\newtheorem{thm}{Theorem}
\newtheorem{lem}[thm]{Lemma}
\newtheorem{prop}[thm]{Proposition}
\theoremstyle{definition}
\newtheorem{defn}[thm]{Definition}
\theoremstyle{remark}
\newtheorem{rem}[thm]{Remark}
\numberwithin{equation}{chapter}
\newcommand{\N}{\mathbb{N}}
\newcommand{\R}{\mathbb{R}}
\newcommand{\T}{\mathbb{T}}
\newcommand{\Z}{\mathbb{Z}}
\DeclareMathOperator*{\image}{Im}
\begin{document}

\begin{titlingpage}
\aliaspagestyle{page}{empty} 
\pagestyle{empty}
\title{Rotational subsets of the circle}
\author{Dr. J. Ramanathan\\
Department of Mathematics \\
Eastern Michigan University \\
Ypsilanti MI 48197}
\date{}
\maketitle
\begin{abstract}\SingleSpacing
	A rotational subset, relative to a continuous transformation $T: \T \to
	\T$ on $\T = \R/\Z$, is a closed, invariant subset of $\T$ that is
	minimal and on which $T$ respects the standard orientation of the unit
	circle. In the case where $T$ is the standard angle doubling map, such
	subsets were studied by Bullet and Sentenac. The case where $T$
	multiplies angles by an integer $d > 2$ was studied by Goldberg and
	Tresser, and Blokh, Malaugh, Mayer, Oversteegen, and Parris.  These
	authors prove that infinite rotational subsets arise as extensions of
	irrational rotations of the unit circle. This paper studies the extent
	to which such results hold for general continuous maps of the circle.
	In particular, we prove the structure theorem mentioned above holds for
	the wider class of continuous transformations $T$ with finite fibers.
	Our methods are more analytic in nature than the works mentioned.  The
	paper concludes with a construction of infinite rotational sets for a
	class of continuous maps that includes examples that are not equivalent
	to the model cases treated previously. 
	
	\ 
	
	\noindent{\textsc 2010 MSC Subject Clasification: 37B05}

	\noindent{\textsc Supported in part by a Faculty Research Fellowship from Eastern Michigan University (Fall 2016).}

	\noindent{\textsc Email: jramanath@emich.edu}

	\noindent{\textsc ORCID: 0000-0001-8560-1160}
\end{abstract}
\end{titlingpage}
 
\mainmatter
\OnehalfSpacing
\setcounter{page}{1}
\setcounter{chapter}{1}
\pagestyle{plain}

\setlength\headheight{12pt}

\section*{Introduction}

In what follows, $\T$ denotes the unit circle with the standard orientation.

\begin{defn}
	Let $X \subset \T$ and $f: X \to X$ be a continuous transformation. The
	map $f$ \emph{preserves cyclic order} if, for any $P, Q, R \in X$ with
	distinct images, the arcs $P\,Q\,R$ and $f(P)\,f(Q)\,f(R)$ have the
	same orientation.
\end{defn}

Now consider a continuous transformation $T:\T \to \T$ and a compact set $X \subseteq \T$. 

\begin{defn}
	The subset $X$ is \emph{rotational} if
	\begin{itemize}
		\item $X$ is invariant, \emph{i.e.} $TX \subseteq X$, 
		\item $X$ is minimal, and 
		\item $T\vert_X$ preserves cyclic order.
	\end{itemize}
\end{defn}

Our objective is to study the structure of
infinite, rotational, proper subsets under fairly general assumptions about the
nature of $T$. The main result is:

\begin{thm}\label{thm:main}
        Let $T: \T \to \T$ be a continuous function with finite fibers and 
        $X \subset \T$ an infinite, rotational, proper subset of $\T$ with 
        respect to this transformation. Then:
        
\begin{itemize}
\item[i.] The dynamical system $(X,T)$ is an extension of an irrational rotation
of the circle.
\item[ii.] The function $\phi: X \to \T$ that realizes this extension has singleton 
	fibers except at countably many points of $\T$. Over these exceptional points, 
	the fibers have cardinality two, corresponding to endpoints of gaps of the 
	set $X$ in $\T$. 
\item[iii.] $(X,T)$ has a unique ergodic measure $\mu$ and $\phi_\ast \mu$ is the 
	standard Lebesgue measure on $\T$. 
\end{itemize}

\end{thm}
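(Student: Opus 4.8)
The plan is to reduce the problem to the classical theory of monotone degree-one circle maps and then read off (i)--(iii) from Poincar\'e--Denjoy theory. First I would encode the cyclic-order preservation of $T\vert_X$ as genuine monotonicity. Lifting along the covering $p:\R\to\T$, set $\tilde X = p^{-1}(X)$, a closed, $\Z$-periodic subset of $\R$. Because $T\vert_X$ preserves cyclic order and $TX=X$ (the latter since $TX$ is closed, invariant and nonempty, hence all of $X$ by minimality), one can choose a lift $\tilde T:\tilde X\to\tilde X$ that is non-decreasing and commutes with the unit translation. I would then extend $\tilde T$ across the bounded complementary intervals of $\tilde X$ (the lifts of the gaps of $X$) by monotone interpolation between the already-defined endpoint values, producing a continuous, non-decreasing $F:\R\to\R$ with $F(t+1)=F(t)+1$ that descends to a monotone degree-one circle map, still called $F$, agreeing with $T$ on $X$. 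The finite-fiber hypothesis enters in making this reduction rigorous: it ensures $T\vert_X$ has the regularity needed for the extension to be continuous of degree exactly one, rather than a degenerate collapse. I expect this construction, and the verification that $F$ has degree one, to be the main obstacle; what follows is comparatively standard.

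Granting $F$, it has a Poincar\'e rotation number $\rho$. If $\rho$ were rational, the minimal sets of $F$ would be periodic orbits, forcing $X$ finite; since $X$ is infinite, $\rho$ is irrational. By Poincar\'e's semiconjugacy theorem there is a continuous, monotone, degree-one surjection $\phi:\T\to\T$ with $\phi\circ F=R_\rho\circ\phi$, where $R_\rho$ is rotation by $\rho$. Restricting to $X$ and using $F\vert_X=T\vert_X$ gives $\phi\circ T=R_\rho\circ\phi$ on $X$; as $\phi(X)$ is closed, $R_\rho$-invariant and nonempty, it equals $\T$ by minimality of the irrational rotation. Thus $\phi\vert_X:X\to\T$ exhibits $(X,T)$ as an extension of $(\T,R_\rho)$, proving (i).

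For (ii) I would invoke the standard dichotomy: a monotone degree-one circle map with irrational rotation number has a unique minimal set, which is either all of $\T$ or a Cantor set. Since $F\vert_X=T\vert_X$ is minimal, $X$ is this minimal set, and being proper it is a Cantor set (consistent with minimality ruling out isolated points, so $X$ is perfect). The semiconjugacy $\phi$ is then constant on the closure of each gap of $X$, collapsing it to a point, and is otherwise injective on $X$. Monotonicity together with surjectivity of $\phi\vert_X$ forces distinct gaps to distinct images: if two gaps mapped to the same $\theta$, then $\phi$ would be constant on the entire arc between them, contradicting that $\phi\vert_X$ is onto across the Cantor portion separating them. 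Hence each nondegenerate fiber of $\phi$ is the closure of a single gap $[u,v]$ with $u,v\in X$ and $(u,v)\cap X=\emptyset$, so $\phi^{-1}(\theta)\cap X=\{u,v\}$ has cardinality two exactly when $\theta$ is the image of a gap, and is a singleton otherwise. As there are countably many gaps, the exceptional set is countable, giving (ii).

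Finally, (iii) follows from the almost one-to-one nature of $\phi\vert_X$. Let $C\subset\T$ be the countable set of images of gaps, so that $\phi\vert_X$ restricts to a bijection of $X\setminus\phi^{-1}(C)$ onto $\T\setminus C$. Any $T$-invariant Borel probability measure $\mu$ on $X$ (one exists by compactness of $X$ and continuity of $T$) pushes forward to an $R_\rho$-invariant measure $\phi_\ast\mu$, which equals Lebesgue measure by unique ergodicity of the irrational rotation; in particular $\mu(\phi^{-1}(C))=\phi_\ast\mu(C)=0$ since $C$ is countable and Lebesgue-null. Thus $\mu$ is carried by the set where $\phi\vert_X$ is a bijection onto the full-measure set $\T\setminus C$, so $\mu$ is the pullback of Lebesgue measure under that bijection and is uniquely determined. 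This yields unique ergodicity of $(X,T)$ together with $\phi_\ast\mu=\mathrm{Leb}$, completing (iii).
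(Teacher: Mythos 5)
Your overall strategy --- extend $T|_X$ to a monotone degree-one circle map and quote Poincar\'e theory --- is viable and is in fact closer in spirit to the cited works of Bullett--Sentenac and Goldberg--Tresser than to this paper, which deliberately takes a more analytic route (building the semiconjugacy directly as the cumulative distribution function of a Krylov--Bogolioubov invariant measure, and getting unique ergodicity from the mean ergodic theorem and Weyl equidistribution rather than from almost-injectivity of $\phi$). But as written your proposal has a genuine gap, and it sits exactly at the step you flag and then defer: the existence of a non-decreasing lift $\tilde T:\tilde X\to\tilde X$ commuting with the unit translation. This does \emph{not} follow from cyclic-order preservation alone. The hypothesis only constrains triples of points with three \emph{distinct} images, so a ``linked fibers'' configuration --- points $x_1<x_2<x_3<x_4$ in cyclic order with $Tx_1=Tx_3\ne Tx_2=Tx_4$ --- satisfies the cyclic-order condition vacuously (every triple among them repeats an image) yet admits no monotone degree-one extension, since the fibers of such an extension must be unlinked arcs. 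Saying that the finite-fiber hypothesis ``ensures the regularity needed'' names the hypothesis without supplying the argument.

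Ruling out that pathology is precisely what occupies the first section of the paper: one first shows $X$ is perfect, then uses perfectness together with the finite fibers of $T$ on all of $\T$ to perturb any point of a fiber to nearby points of $X$ with \emph{distinct} images (Lemma \ref{lem:perturbation}), which reinstates the cyclic-order constraint and yields that fibers of $T|_X$ have cardinality at most two (Proposition \ref{prop:fiber}); the subsequent propositions on $\alpha,\alpha',\beta',\beta$ then establish the two-arc monotone structure that your interpolation step needs as input. So your reduction is not wrong, but the ``main obstacle'' you set aside is the actual content of the theorem at this level of generality, and it cannot be dispatched by appealing to cyclic-order preservation plus a one-line invocation of finite fibers. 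The remainder of your plan (irrationality of $\rho$ from infiniteness of $X$, the Poincar\'e semiconjugacy, countably many two-point fibers over gap images, and unique ergodicity by pushing an invariant measure forward to Lebesgue and pulling back through the almost-everywhere bijection) is sound, modulo one smaller soft spot: to see that $\phi$ cannot be constant on an arc meeting $X$ in a nonempty open set, the cleanest argument is that finitely many iterates of such a set would cover $X$ by minimality, forcing $\T=\phi(X)$ to be finite; your appeal to surjectivity ``across the Cantor portion'' does not quite do this.
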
 

The above theorem was proved in the case when $T: \T \to \T$ is the angle
doubling map by Bullet and Sentenac \cite{bull-sent}. It was established in the
case where $T$ is the standard $d$-fold cover of the unit circle by Goldberg
and Tresser \cite{gold-tress} and by Blokh, Malaugh, Mayer, Oversteegen and
Parris \cite{bl-mal}. These works were motivated by the study of
the action of quadratic dynamical systems in the complement of the Julia set. 

The proof of theorem \ref{thm:main} will be accomplished
over the next two sections. We then revisit the $d$-fold cover case
from our point of view. The last section presents a class of rotational
subsets that include examples which are not conjugate to the previously known cases. 
On the basis of theorem \ref{thm:main} and the examples of the last section, it is natural
to ask if rotational subsets exist for any continuous transformation of $\T$ with 
degree larger than $1$.

\section*{Structure of rotational subsets}

\emph{Henceforth}, unless otherwise stated, we will work on the case where 
\begin{itemize}
	\item $T$ has finite fibers, and 
	\item the rotational set $X$ is an infinite, proper subset of $\T$.
\end{itemize}

Suppose $x_0 \in X$ is an isolated point of such a dynamical system. Minimality
implies that the forward orbit, $\{T^nx_0: n > 0\}$, is dense in
$X$. As a consequence, we have that $T^n x_0 = x_0$ for some positive integer
$n$. The forward orbit $\mathcal{O}$ must then be finite as well as dense.
Therefore, $X = \{T^nx_0: n \ge 0\}$. This contradiction implies that $X$ cannot
have any isolated points. Therefore $X$ is a perfect subset of $\T$.

By conjugating with the appropriate rotation, we can assume that $0 \notin X$. 
Parameterize $\T$ by the unit interval $[0,1[$ and note that
\[
	0 < \alpha = \inf X < \beta = \sup X < 1.
\]

\begin{lem}\label{lem:perturbation} 
	Suppose $a, b \in X$ and $Ta = b$. Then, there are strictly monotone
	sequences $a_n$ and $b_n$ such that $\lim_{n\to\infty} a_n = a$, 
	$\lim_{n\to\infty} b_n = b$ and $Ta_n = b_n$, for all $n \in \mathbb{N}$. 
\end{lem}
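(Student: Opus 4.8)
The plan is to build the sequences directly inside $X$, using only that $X$ is perfect (established just above), that $T$ is continuous, and---crucially---that $T$ has finite fibers; cyclic order preservation is not needed for this particular statement.

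First I would exploit perfectness. Since $X$ has no isolated points, $a$ is an accumulation point of $X$, so there is a sequence $c_k \in X \setminus \{a\}$ with $c_k \to a$. Passing to a monotone subsequence (which exists by the standard fact that every bounded real sequence has one, and which cannot be eventually constant since its limit $a$ is never attained) and then to a strictly monotone one, I may assume $c_k$ is strictly monotone. I then set $d_k = Tc_k$, and by continuity of $T$ obtain $d_k \to b$. Applying the monotone-subsequence principle a second time to $d_k$, I extract a monotone subsequence. Because a subsequence of a strictly monotone sequence is again strictly monotone, after reindexing the corresponding $c_k$ remain strictly monotone with limit $a$, while $d_k$ is now monotone with limit $b$.

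The real work is to upgrade ``monotone'' to ``strictly monotone'' on the image side, and this is exactly where the finite-fiber hypothesis enters. A monotone sequence that fails to be strictly monotone must repeat some value infinitely often, and a monotone sequence converging to $b$ can only repeat the value $b$ itself (a non-decreasing sequence taking a value $v$ infinitely often is eventually equal to $v$, forcing $v = b$). But $d_k = Tc_k = b$ for infinitely many of the distinct points $c_k$ would display an infinite fiber $T^{-1}(b)$, contradicting the assumption on $T$. Hence only finitely many $c_k$ satisfy $d_k = b$; discarding them, the monotone sequence $d_k$ takes infinitely many distinct values and therefore contains a strictly monotone subsequence. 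Taking $a_n$ and $b_n$ to be the resulting subsequences of $c_k$ and $d_k$ gives strictly monotone sequences with $a_n \to a$, $b_n \to b$, and $Ta_n = b_n$ by construction.

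I expect the only genuine obstacle to be this final strictness step: without finite fibers, $T$ could be locally constant along an approach to $a$ (its level set over $b$ accumulating at $a$), in which case no strictly monotone image sequence could be produced. The finite-fiber hypothesis is precisely what rules out this degeneracy, and everything else reduces to routine extraction of subsequences and continuity.
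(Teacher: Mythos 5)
Your proof is correct and follows essentially the same route as the paper's: perfectness of $X$ yields a strictly monotone sequence in $X$ converging to $a$, continuity carries it to a sequence of images converging to $b$, the finite-fiber hypothesis guarantees that only finitely many image terms equal $b$, and a final subsequence extraction makes the image sequence strictly monotone. You merely spell out the last step (which the paper dispatches in one line); the only nitpick is that your phrase ``a monotone sequence that fails to be strictly monotone must repeat some value infinitely often'' is not literally true as stated, but the deduction you actually use --- every value occurs finitely often, hence infinitely many distinct values appear, hence a strictly monotone subsequence exists --- is sound.
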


\begin{proof} 
	Since $X$ is perfect, one can construct a sequence $a_n \in X$ such
	that $a_n \ne a$ for all $n \in \N$ and $a_n \to a $ as $n \to \infty$.
	By passing to a subsequence, one can further arrange the $a_n$ to be strictly 
	monotonic. Since $T$ has finite fibers, $Ta_n \ne b$ for all but finitely many 
	$n$. Moreover, $Ta_n \to Ta$. By once again passing to a subsequence, if necessary, 
	we may arrange that $Ta_n$ is strictly monotone. 
\end{proof}

\begin{prop}\label{prop:fiber} 
	The fibers of $T|X$ have cardinality at most two.
\end{prop}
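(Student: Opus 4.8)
The plan is to argue by contradiction. Since $T$ has finite fibers, the fiber of $T|_X$ over any point is finite; suppose some fiber over $y\in X$ contained three points, which I list as $a_1<a_2<a_3$ in the linear order that $[0,1[$ induces on $X$ (legitimate because $0\notin X$). The first observation is that the two open arcs $(a_1,a_2)$ and $(a_2,a_3)$ cannot both be disjoint from $X$: otherwise $(a_1,a_3)\cap X=\{a_2\}$ and $a_2$ would be isolated, contradicting that $X$ is perfect. So at least one of them, say $(a_1,a_2)$ (the other case being symmetric), meets $X$, and since $X$ is perfect it then contains infinitely many points of $X$. As the fiber over $y$ is finite, I may fix an interior reference point $c\in X\cap(a_1,a_2)$ with $Tc\neq y$.

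Next I would perturb. Applying Lemma \ref{lem:perturbation} to each pair $(a_i,y)$ produces strictly monotone sequences $a_{i,n}\to a_i$ in $X$ whose images $b_{i,n}=Ta_{i,n}\in X$ are strictly monotone, tend to $y$, and satisfy $b_{i,n}\neq y$. Fix a local coordinate $u$ on a small arc about $y$, with $u(y)=0$ and $u$ increasing in the positive (counterclockwise) direction, and set $u_i(n)=u(b_{i,n})$; each $u_i$ is strictly monotone with limit $0$, hence of constant sign. For all large $n$ the points $a_{1,n},c,a_{2,n},a_{3,n}$ occur in this cyclic order, matching that of $a_1,c,a_2,a_3$, and all three images $b_{1,n},b_{2,n},b_{3,n}$ lie in the chosen small arc about $y$ while $Tc$ stays in its complement.

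The contradiction then comes from evaluating cyclic order on two different triples sharing the pair $\{a_{1,n},a_{2,n}\}$. On the all-fiber triple $(a_{1,n},a_{2,n},a_{3,n})$, which is positively oriented in the domain, preservation of cyclic order forces the images to be positively oriented; as they cluster near $y$, this reads $u_1(n)<u_2(n)<u_3(n)$, so in particular $u_1(n)<u_2(n)$. On the mixed triple $(a_{1,n},c,a_{2,n})$, also positively oriented in the domain, the middle image $Tc$ sits in the complement of the small arc, so the image triple $(b_{1,n},Tc,b_{2,n})$ is positively oriented precisely when $b_{2,n}$ lies clockwise of $b_{1,n}$, i.e. when $u_2(n)<u_1(n)$. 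These two conclusions are incompatible, so no fiber can contain three points.

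The routine part is the orientation bookkeeping of the previous paragraph; the genuine obstacle is making sure cyclic-order preservation is actually applicable, that is, that the images in each triple are pairwise distinct. For the mixed triple one only needs $b_{1,n}\neq b_{2,n}$ (the value $Tc$ is automatically separated from them), and for the all-fiber triple one needs $b_{1,n},b_{2,n},b_{3,n}$ distinct. I expect to secure this by exploiting the strict monotonicity of the $b_{i,n}$ together with the finiteness of the fiber over $y$: passing to subsequences, and, where two of the sequences lie on the same side of $y$, choosing the perturbations so that their (infinite, by finiteness of fibers) sets of attainable image values do not force coincidence. Dovetailing this distinctness argument with the symmetric treatment of the case where $(a_2,a_3)$, rather than $(a_1,a_2)$, is the arc meeting $X$ is the part that needs the most care.
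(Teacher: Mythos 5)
Your overall strategy---perturb each fiber point via Lemma \ref{lem:perturbation} and test cyclic-order preservation on triples---is the same as the paper's, and your mixed-triple observation is sound: because $Tc$ lies outside the small arc around $y$, it acts as a base point that converts the cyclic order of $(b_{1,n},Tc,b_{2,n})$ into the linear condition $u_2(n)<u_1(n)$. The gap is in the other half of the contradiction. For three points $b_{1,n},b_{2,n},b_{3,n}$ all lying in a small arc around $y$, positive orientation of the triple does \emph{not} ``read'' as $u_1(n)<u_2(n)<u_3(n)$; it only says that $(u_1(n),u_2(n),u_3(n))$ is a cyclic rotation of its increasing rearrangement, i.e.\ one of $u_1<u_2<u_3$, $u_2<u_3<u_1$, or $u_3<u_1<u_2$. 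Cyclic order on a proper arc never collapses to the linear order of a coordinate unless a reference point \emph{outside} the arc is present---which is exactly what $Tc$ supplies in your mixed triple and what is absent in the all-fiber triple. In particular the configuration $u_2(n)<u_3(n)<u_1(n)$ satisfies both of your conclusions simultaneously (the all-fiber image triple is positively oriented, and $u_2(n)<u_1(n)$), so they are not incompatible and the proof does not close. Adding the remaining mixed triples built from the same $c$ does not rescue it: $(a_{1,n},c,a_{3,n})$ gives $u_3<u_1$ and $(c,a_{2,n},a_{3,n})$ gives $u_2<u_3$, and $u_2<u_3<u_1$ is still consistent with all of these constraints.

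What is missing is precisely what the paper's three-case analysis supplies: information about the \emph{signs} of the $u_i$ (which side of $y$ each image sequence approaches from), extracted by exploiting the freedom to choose \emph{different} indices along different sequences. For example, testing $(a_{1,k},c,a_{2,l})$ for all large $k$ and $l$ independently forces $u_2<0<u_1$ (if both sequences approached $y$ from the same side, advancing one index would slip one image past the other and violate $u_2(l)<u_1(k)$); once that is known, the triple $(a_{1,k},a_2,a_{3,m})$, whose middle image is exactly $y$, yields a reversal for suitable independent $k$ and $m$. Your write-up fixes a single index $n$ across all sequences, which forfeits exactly this leverage; the distinctness issues you flag at the end are indeed routine, but the orientation step is where the real work lives, and as written it is incorrect.
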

 
\begin{proof}
	Suppose, to the contrary, that $x_0, x_1, x_2 \in X$  are three distinct points arranged in
	increasing order with the same image under $T$. So $T x_i = y$ for $i =
	0, 1$ and $2$. By lemma \ref{lem:perturbation}, there are
	strictly monotone sequences $a^{(i)}_n, i = 0,1,2$ such that $a^{(i)}_n
	\to x_i$. Morevoer, $b^{(i)}_n =  T a^{(i)}_n$ are strictly monotone and 
	approach $y$. 
	
	The proof proceeds according to how $b^{(0)}_n$ and $b^{(2)}_n$
	approach $y$.

	\emph{Case $b^{(0)}_n$ and $b^{(2)}_n$ approach $y$ from the same side:}
	Without loss of generality, assume that both sequences approach $y$ from below.
	In this case, we can use the properties of the sequence $b^{(i)}_n$ to find 
	indices $k$ and $l$ so that 
	\[
		b^{(0)}_k < b^{(2)}_l < y
	\]
	and $a^{(0)}_k$, $x_1$ and $a^{(2)}_l$ are in increasing order.

	This means that $T$ changes the cyclic order of the points $a^{(0)}_k$, $x_1$, 
	and $a^{(2)}_l$.

	\emph{Case $b^{(0)}_n \searrow y$ and $b^{(2)}_n \nearrow y$:}

	Choose $n$ sufficiently large so that
	\[
		a^{(0)}_n < x_1 < a^{(2)}_n.
	\]
	But $T a^{(2)}_n < y < T a^{(0)}_n$, so $T$ doesn't preserve cyclic order. 

	\emph{Case $b^{(0)}_n \nearrow y$ and $b^{(2)}_n \searrow y$:}
	By invoking lemma \ref{lem:perturbation} again, we construct a small perturbation of $x_1$, 
        say $a'$, with the property that $Ta' \ne y$. We treat the case where $Ta' < y$ --- 
	the other case is handled similarly. In this situation, there is a sufficiently large 
	$n$ with the property that 
	\[
		Ta' < Ta^{(0)}_n < y < Ta^{(2)}_n.
	\]
        Observe that $T$ doesn't preserve the cyclic order of $a^{(0)}_n, a'$, and $a^{(2)}_n$.

	Thus, in all three cases, we have a contradiction.
\end{proof}

The minimality condition insures that $T$ is surjective. Consequently, we may put
$\alpha' = \max \{x: Tx = \beta\}$ and $\beta' = \min \{x: Tx = \alpha\}$.

\begin{prop} 
	The inequality 
	\[ 
		\alpha < \alpha' < \beta' < \beta 
	\]
	holds. Moreover, $[\alpha',\beta']$ is a gap for $X$, \emph{i.e.} 
	$X \cap ]\alpha',\beta'[ = \emptyset$.
\end{prop}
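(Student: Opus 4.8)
The plan is to extract the whole statement from cyclic order preservation applied to a few well‑chosen triples, with the decisive ingredient being minimality. Throughout I call an ordered triple of points of $X$ \emph{positively oriented} when its points occur in the standard (counterclockwise) orientation of $\T$; since $X \subseteq [\alpha,\beta]$ sits in one arc missing $0$, this is simply the increasing order of parameters in $[\alpha,\beta]$, and preservation of cyclic order says that $T|_X$ sends positively oriented triples with distinct images to positively oriented triples. Recall $T\alpha'=\beta$ and $T\beta'=\alpha$, that $\alpha'$ is the greatest and $\beta'$ the least point of $X$ with these images (minimality gives $T(X)=X$, so both exist and lie in $X$), and that $T$ has no fixed point in $X$, since a fixed point is a finite invariant set. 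In particular $\alpha'\neq\beta$ and $\beta'\neq\alpha$; the strict inequalities $\alpha<\alpha'$ and $\beta'<\beta$ I defer to the end, and the real content is the ordering $\alpha'<\beta'$.

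The crux, and the step I expect to be the main obstacle, is to prove $\alpha'<\beta'$. First $\alpha'\neq\beta'$, for otherwise $\beta=T\alpha'=T\beta'=\alpha$. So suppose for contradiction that $\beta'<\alpha'$; then $\alpha<\beta'<\alpha'<\beta$, the outer inequalities following from the absence of fixed points. I would then apply cyclic order preservation to the two positively oriented triples $(\beta',\alpha',\beta)$ and $(\alpha,\beta',\alpha')$, whose images are $(\alpha,\beta,T\beta)$ and $(T\alpha,\alpha,\beta)$. When $\alpha<T\beta<\beta$ the entries of the first image are distinct, and a direct orientation check shows $(\alpha,\beta,T\beta)$ is \emph{negatively} oriented, contradicting preservation; the same applies to $(T\alpha,\alpha,\beta)$ when $\alpha<T\alpha<\beta$. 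Since $T\beta=\beta$ and $T\alpha=\alpha$ are excluded as fixed points, the only surviving possibilities are $T\beta=\alpha$ and $T\alpha=\beta$. But then $\{\alpha,\beta\}$ is a period‑two orbit, a finite invariant set, contradicting minimality. Hence $\beta'<\alpha'$ is impossible and $\alpha'<\beta'$.

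With the order in hand, the gap property is a single triple. Suppose $w\in X$ with $\alpha'<w<\beta'$. Then $(\alpha',w,\beta')$ is positively oriented, with image $(\beta,Tw,\alpha)$. If $Tw=\beta$ then $w$ maps to $\beta$ and exceeds $\alpha'$, contradicting maximality of $\alpha'$; if $Tw=\alpha$ then $w<\beta'$ contradicts minimality of $\beta'$. Thus $\alpha<Tw<\beta$, the images are distinct, and $(\beta,Tw,\alpha)$ is negatively oriented — the reversal of the positively oriented $(\alpha,Tw,\beta)$ — again contradicting preservation. So no such $w$ exists and $X\cap{}]\alpha',\beta'[=\emptyset$.

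Finally I would strengthen the weak endpoint inequalities using that $X$ is perfect. If $\alpha'=\alpha$, then $]\alpha',\beta'[={}]\alpha,\beta'[$ is a gap abutting $\alpha=\inf X$; but $\alpha$ is not isolated, so points of $X$ accumulate on it from the right and lie in $]\alpha,\beta'[$, a contradiction. Symmetrically $\beta'=\beta$ is impossible. This yields $\alpha<\alpha'<\beta'<\beta$ together with the gap, completing the argument. The one genuinely clever point is the pair of triples forcing the period‑two orbit; everything else is bookkeeping with orientations, and it is worth noting that this route uses neither the perturbation lemma nor the fiber bound.
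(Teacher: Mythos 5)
Your proof is correct, and for the crux $\alpha'<\beta'$ it takes a genuinely different route from the paper. The paper's argument for this step invokes the perturbation lemma (Lemma~\ref{lem:perturbation}) to manufacture a point $x\in X$ near $\alpha$ with $\alpha<Tx<\beta$, and then derives the contradiction from the single triple $(x,\beta',\alpha')$; this leans on the perfectness of $X$ and the finite-fiber hypothesis. You instead work with the endpoints themselves, using the two triples $(\beta',\alpha',\beta)$ and $(\alpha,\beta',\alpha')$, and you dispose of the degenerate cases where the images fail to be distinct by observing that the only surviving possibility, $T\alpha=\beta$ and $T\beta=\alpha$, makes $\{\alpha,\beta\}$ a finite closed invariant set, contradicting minimality. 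Your version is more elementary in that it needs only minimality and cyclic-order preservation for this step, not the perturbation machinery; the trade-off is a slightly longer case analysis, and it incidentally pre-proves facts ($T\alpha\ne\beta$, $T\beta\ne\alpha$) that the paper only establishes in the following proposition. Your treatment of the gap (the triple $(\alpha',w,\beta')$ with $Tw$ forced strictly between $\alpha$ and $\beta$ by the extremality of $\alpha'$ and $\beta'$) and of the strict endpoint inequalities (via the absence of isolated points) coincides with the paper's.
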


\begin{proof}
	Suppose that $\alpha' > \beta'$. Minimality ensures that $\beta' >
	\alpha$.  By using the perturbation result (lemma
	\ref{lem:perturbation}), we can find an $x$ near $\alpha$ such that
	$\alpha < Tx < \beta$. From this we have that $T$ reverses the cyclic
	order of the triple $x, \beta', \alpha'$.

	Now, consider an $x \in X$ with $\alpha' < x < \beta'$. By the definition 
	of $\alpha'$ and $\beta'$, $Tx$ must be distinct
	form $\alpha$ or $\beta$. Hence $Tx$ must be strictly between $\alpha$
	and $\beta$. This means that $T$ reverses the cyclic order of $\alpha',
	x, \beta'$. Contradiction.

	Finally, since $X$ has no isolated points, $\alpha' \ne \alpha$ 
	and $\beta' \ne \beta$.
\end{proof}

\begin{lem}\label{lem:increasing}
	Let $x, y \in X$ If $x < y$ and $Tx < Ty$, then for any $z \in X$ with
	$x < z < y$, we must have $Tx \le Tz \le Ty$.
\end{lem}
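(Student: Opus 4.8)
The plan is to argue by contradiction, reducing the whole statement to a single application of the cyclic-order-preserving hypothesis. Suppose $z \in X$ satisfies $x < z < y$ but the conclusion fails, so that either $Tz < Tx$ or $Tz > Ty$. Since $Tx < Ty$ is given, in either case $Tz$ lies strictly outside the closed interval $[Tx, Ty]$, and in particular the three images $Tx$, $Tz$, $Ty$ are pairwise distinct. This is precisely the situation in which the definition of preservation of cyclic order becomes applicable to the triple $x, z, y$.

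The key structural observation I would exploit is that every relevant point lives in the genuine subarc $[\alpha,\beta] \subset\, ]0,1[$. Because $X$ is invariant and $X \subseteq [\alpha,\beta]$, the images $Tx$, $Tz$, $Ty$ again lie in $[\alpha,\beta]$, an arc that does not wrap around the basepoint $0$. Consequently, for any three distinct points of $X$ the positive (counterclockwise) cyclic order coincides, up to cyclic rotation, with their ordinary order as real numbers in $[0,1[$. In particular the increasing triple $x < z < y$ is positively oriented.

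With these facts in place I would invoke the hypothesis: $T|_X$ carries the positively oriented triple $x, z, y$ to the positively oriented triple $Tx, Tz, Ty$. Since these images are distinct reals confined to $[\alpha,\beta]$, positive orientation forces their linear order to be one of the three cyclic rotations $Tx < Tz < Ty$, or $Tz < Ty < Tx$, or $Ty < Tx < Tz$. The latter two each assert $Ty < Tx$, contradicting the standing assumption $Tx < Ty$; hence $Tx < Tz < Ty$. This contradicts the supposition that $Tz$ lies outside $[Tx,Ty]$, and thus establishes $Tx \le Tz \le Ty$.

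I do not expect a serious obstacle; the only point demanding care is the bookkeeping between cyclic and linear order. The definition requires distinct images, so the coincidence cases $Tz = Tx$ and $Tz = Ty$ cannot be fed into it directly --- but these are exactly the boundary situations already permitted by the weak inequalities in the conclusion, so they need not be ruled out. The one place where a convention could silently slip is the passage from the cyclic-order conclusion back to the real-number inequality, so I would state explicitly that confinement to the subarc $[\alpha,\beta]$ is what legitimizes it.
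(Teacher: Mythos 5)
Your proof is correct and is essentially the paper's argument: the paper disposes of the lemma in one line by noting that failure of the conclusion means $T$ reverses the cyclic order of the triple $x,z,y$, and your write-up simply makes that explicit (including the correct observation that the equality cases $Tz=Tx$, $Tz=Ty$ are absorbed by the weak inequalities). One small remark: the confinement to $[\alpha,\beta]$ is not actually needed, since for any three distinct points of $[0,1[$ the increasing linear order already determines the positive cyclic orientation, so your orientation bookkeeping goes through without that step.
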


\begin{proof}
	Failure of the conclusion clearly entails that $T$ reverses the cyclic
	order of the triple $x, z, y$.
\end{proof}

\begin{prop}
	$\alpha < T\beta \le T\alpha < \beta$.
\end{prop}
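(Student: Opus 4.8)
The plan is to treat the three order relations $\alpha < T\beta$, $T\beta \le T\alpha$, and $T\alpha < \beta$ separately, using throughout that $T\alpha, T\beta \in X \subseteq [\alpha,\beta]$, so that a priori $\alpha \le T\alpha \le \beta$ and $\alpha \le T\beta \le \beta$. Before anything else I would record that $T$ has no fixed point on $X$: a fixed point would have a one-element, hence non-dense, forward orbit, contradicting minimality of the infinite set $X$. In particular $T\alpha \ne \alpha$ and $T\beta \ne \beta$, so the a priori bounds sharpen to $T\alpha > \alpha$ and $T\beta < \beta$; the second of these is needed below.

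First I would prove the two \emph{extreme} inequalities $T\alpha < \beta$ and $T\beta > \alpha$, which say that the leftmost point of $X$ cannot be sent to the rightmost and vice versa. These are symmetric, so consider $T\alpha < \beta$ and suppose instead $T\alpha = \beta$. Applying lemma \ref{lem:perturbation} at $\alpha$, and remembering that $\alpha = \inf X$ forces the approximating points above $\alpha$ while $\beta = \sup X$ forces their images to approach $\beta$ from below, I obtain a sequence $a_n \in X$ with $a_n \searrow \alpha$ and $Ta_n \nearrow \beta$. This is order-reversing along the sequence, so choosing two terms $a_n < a_m$ with $Ta_m < Ta_n < \beta$ and forming the triple $\alpha < a_n < a_m$ yields images $\beta, Ta_n, Ta_m$ whose cyclic order is the reverse of the domain's, contradicting that $T|_X$ preserves cyclic order. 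The inequality $T\beta > \alpha$ follows by the mirror-image argument applied at $\beta$.

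The central step is $T\beta \le T\alpha$, where I expect the real work. Using minimality I would first note $\overline{TX} = X$, so $\beta \in X$ has a preimage in $X$; by compactness and continuity there is a point $p \in X$ with $Tp = \beta$. Since $T\alpha \ne \beta$ (just proved) we have $p \ne \alpha$, and since $p$ is a preimage of $\beta$ it satisfies $p \le \alpha' < \beta$; thus $\alpha < p < \beta$. Now suppose, for contradiction, that $T\beta > T\alpha$. Then, using $T\beta < \beta$ from the no-fixed-point remark together with $T\alpha < \beta$, the three image points satisfy $T\alpha < T\beta < \beta$ and are distinct. The triple $\alpha < p < \beta$ is positively oriented, but its image triple $(T\alpha, \beta, T\beta)$ lists the largest of the three values, $\beta$, in the middle slot, so it is negatively oriented, again contradicting cyclic-order preservation. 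Hence $T\beta \le T\alpha$.

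The main obstacle is this middle inequality, and within it two points of care: producing a preimage of $\beta$ that genuinely lies in $X$ and strictly inside $\,]\alpha,\beta[\,$ (handled by the density of $TX$ in $X$ together with $T\alpha \ne \beta$ and the definition of $\alpha'$), and arranging that all three images in each application are distinct so that cyclic-order preservation can legitimately be invoked. This is precisely why the no-fixed-point observation and the strict inequalities $T\alpha < \beta$ and $T\beta < \beta$ are established first. The perturbation arguments of the first step carry a parallel, if lighter, burden of keeping track of the directions from which the sequences and their images converge.
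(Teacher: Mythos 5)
Your proof is correct, and while the two extreme inequalities are handled essentially as in the paper (rule out $T\alpha=\beta$ and $T\beta=\alpha$ by producing, via lemma \ref{lem:perturbation}, a triple whose cyclic order is visibly reversed), your argument for the middle inequality $T\beta \le T\alpha$ takes a genuinely different route. The paper assumes $T\beta > T\alpha$ and invokes lemma \ref{lem:increasing} with the endpoints $\alpha < \beta$ to conclude $T\alpha \le Tx \le T\beta$ for every $x\in X$ between them; since $T\alpha > \alpha$ (no fixed points), the image of $T$ is then a proper closed invariant subset of $X$, contradicting minimality. You instead produce an interior preimage $p$ of $\beta$ (using $TX=X$ from minimality, $T\alpha\ne\beta$, and $T\beta\ne\beta$) and observe that the triple $\alpha<p<\beta$ maps to $(T\alpha,\beta,T\beta)$, which under the hypothesis $T\alpha<T\beta<\beta$ is negatively oriented --- a direct violation of cyclic-order preservation. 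Both arguments are sound and use only material available at this point in the paper; yours has the small advantage of deriving all three inequalities from the single mechanism of exhibiting an order-reversed triple, while the paper's reuses lemma \ref{lem:increasing} and the surjectivity consequence of minimality, which it needs again in the very next proposition. Your care in checking that the three image points are distinct before invoking cyclic-order preservation (via the no-fixed-point remark and $T\alpha\ne\beta$) is exactly the right bookkeeping, and is a detail the paper leaves implicit.
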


\begin{proof}
	Note that minimality rules out the possibilities
	that $T\alpha = \alpha$ and $T\beta = \beta$. 
	
	If $T\beta = \alpha$, then by lemma
	\ref{lem:perturbation} there is an $x \in X$ near $\beta$ such that
	$T\alpha > Tx > \alpha$. This means that $T$ reverses the cyclic order
	of the triple $\alpha, x, \beta$. In a similar way, $T\alpha = \beta$
	can also be ruled out. 

	Only the middle inequality remains.  Suppose, to the contrary, that
	$T\beta > T\alpha$. The previous proposition implies that $T\alpha \le
	Tx \le T\beta$, for any $x \in X$ that is strictly between $\alpha$ and
	$\beta$. Thus, $\image T$ is a proper subset of $X$. This violates the
	minimality assumption.
\end{proof}

For any $x_0, x_1 \in [0,1)$, set
\[
	X_{x_0,x_1} = X \cap [x_0,x_1].
\]

\begin{prop} 
	$T|X$ is monotone increasing on the sets $X_{\alpha,\alpha'}$ and 
	$X_{\beta',\beta}$. Moreover, 
	\[
		Tx > x \qquad \forall x \in X_{\alpha,\alpha'}
	\]
	and
	\[
		Tx < x \qquad \forall x \in X_{\beta,\beta'}.
	\]
\end{prop}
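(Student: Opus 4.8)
The plan is to establish the two monotonicity assertions first and then read off the displacement inequalities $Tx > x$ and $Tx < x$ as consequences. Throughout I will lean on the two anchoring facts $T\alpha' = \beta = \max X$ and $T\beta' = \alpha = \min X$, together with the already-proved chain $\alpha < T\beta \le T\alpha < \beta$, and on the preliminary observation that minimality forbids fixed points: a fixed point $p = Tp$ would make $\{p\}$ a nonempty closed invariant set, forcing $X = \{p\}$ against the infinitude of $X$. Hence $Tx \ne x$ for every $x \in X$.

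For monotonicity on $X_{\alpha,\alpha'}$ I argue by contradiction: suppose $x < y$ lie in $X_{\alpha,\alpha'}$ with $Tx > Ty$. Since $T\alpha' = \beta = \max X \ge Tx$, the case $y = \alpha'$ is immediately impossible, so $y < \alpha'$ and the triple $x < y < \alpha'$ is legitimate. If the three images $Tx, Ty, \beta$ are distinct, then $Ty < Tx < \beta$, and comparing the (counterclockwise) increasing arc $x\,y\,\alpha'$ with the image arc $Tx\,Ty\,\beta$ shows the latter is clockwise, precisely because $\beta$ is the global maximum of $X$; this contradicts that $T|X$ preserves cyclic order. Given $Tx > Ty$, the only way the images can fail to be distinct is $Tx = \beta$, and this is the delicate case: $x$ is then a second preimage of $\beta$ lying below $\alpha'$, so the triple $x, y, \alpha'$ degenerates and carries no information. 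The monotonicity on $X_{\beta',\beta}$ is entirely symmetric, anchored instead at $\beta'$ with $T\beta' = \alpha = \min X$ and using the right endpoint $\beta$ in the analogous delicate case.

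To resolve the delicate case I enlarge the offending configuration. Noting $\alpha < x$ (since $T\alpha < \beta = Tx$) and that, by Proposition \ref{prop:fiber}, the fiber over $\beta$ has at most the two points $x$ and $\alpha'$, I use continuity of $T$ together with the fact that points of $X$ accumulate at $\alpha'$ from below (perfectness) to select $z \in X$ with $x < z < \alpha'$ and $T\alpha < Tz < \beta$. The triple $\alpha < x < z$ then has image arc $T\alpha\,\beta\,Tz$ with $T\alpha < Tz < \beta$, which is again clockwise, the sought contradiction. I expect this step to show moreover that a second preimage of $\beta$, if present, is separated from $\alpha'$ by a gap, consistent with the two-element fibers anticipated at gap endpoints.

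With weak monotonicity in hand, the displacement inequalities follow by a trapping argument. Suppose some $x_0 \in X_{\alpha,\alpha'}$ had $Tx_0 < x_0$. Then $Tx_0 \in X \cap [\alpha,\alpha'] = X_{\alpha,\alpha'}$, and monotonicity gives $T(Tx_0) \le Tx_0$; since there are no fixed points the inequality is strict, so inductively the forward orbit $T^n x_0$ is a strictly decreasing sequence confined to $X_{\alpha,\alpha'}$. Being bounded below by $\alpha$, it converges to some $L \in X$, and continuity forces $TL = L$, contradicting the absence of fixed points. Hence $Tx \ge x$, and strictness upgrades this to $Tx > x$ on $X_{\alpha,\alpha'}$; the symmetric strictly-increasing, bounded-above orbit argument yields $Tx < x$ on $X_{\beta',\beta}$. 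The orientation computations and the trapping argument are routine; the genuine obstacle is the degenerate monotonicity case $Tx = \beta$ (respectively $Ty = \alpha$), which is exactly where the enlarged triple built from an interval endpoint and a continuity-supplied point is needed.
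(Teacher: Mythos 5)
Your proof is correct, but it diverges from the paper's in both halves, and the comparison is instructive. For monotonicity the paper anchors the test triple at $\beta'$ rather than $\alpha'$: given $x<y$ in $X_{\alpha,\alpha'}$ with $Ty<Tx$, it uses the triple $x,y,\beta'$, whose images are $Tx,Ty,\alpha$ with $\alpha<Ty<Tx$ automatic (the strict lower bound $Ty>\alpha$ comes from the minimality of $\beta'$ in the fiber over $\alpha$, since $y<\beta'$). Because $\alpha$ is the global minimum of $X$ and cannot collide with $Tx$ or $Ty$, the three images are distinct for free, and the degenerate case you labor over ($Tx=\beta$, forcing you to build an auxiliary point $z$ near $\alpha'$ via perfectness, continuity, and finite fibers) simply never arises. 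Your resolution of that case is sound, but the paper's choice of anchor buys a one-line argument. For the displacement inequalities the paper argues that $Tx<x$ together with $\alpha<T\alpha\le Tx<x$ and lemma \ref{lem:increasing} makes $X\cap[\alpha,x]$ a nonempty, proper, closed invariant subset, contradicting minimality directly; your trapping argument instead iterates $T$ to produce a strictly monotone orbit converging to a fixed point, contradicting the absence of fixed points. Both are valid; yours uses only weak monotonicity plus continuity and compactness, while the paper's leans on minimality (the structural hypothesis actually driving the whole theorem) and is arguably more in the spirit of the surrounding arguments. One small caution: your induction needs the observation that each iterate stays in $X_{\alpha,\alpha'}$ (clear, since the orbit is decreasing and bounded below by $\alpha$) and that equality at any stage would manufacture a fixed point; you state both, so the argument closes.
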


\begin{proof}
	Let $x, y \in X \cap [\alpha,\alpha']$ with $x < y$. Suppose $Ty <
	Tx$. Since $y < \beta'$, the definition of $\beta'$ forces $\alpha =
	T\beta' < Ty$.  Thus, $T$ inverts the cyclic order of $x, y, \beta'$.
	This contradiction shows that $T$ is monotonic increasing on
	$[\alpha,\alpha']$. A similar argument applies to $X \cap
	[\beta',\beta]$.

	Next, let $x \in X \cap [\alpha,\alpha']$ and suppose $Tx \le x$. Since
	$T$ has no fixed points in $X$, $Tx < x$. As neither $\alpha$ nor $\alpha'$ have
	this property, $\alpha < x < \alpha'$ and hence $\alpha < T\alpha \le Tx < x$. 
	Lemma \ref{lem:increasing} then implies that $X \cap [\alpha,x]$ is a
	nonempty, proper, closed invariant subset of $X$. Thus $Tx > x$ for $x
	\in X \cap [\alpha,\alpha']$. The last inequality can be verified
	similarly.
\end{proof}

\section*{Coding by irrational rotations of the circle}

By the Krylov-Bogolioubov theorem, there is a Borel probability measure
on $X$ that is invariant under $T$. Fix one such, $\mu$. Regard $\mu$ as a
measure on $[0,1)$ and write $\tilde\Phi$ for its cumulative distribution
function. Since every point of the infinite set $X$ has dense orbit, the invariant 
probability measure $\mu$ cannot include any point masses. Thus $\tilde\Phi$ is 
continuous. As a consequence, $\Phi = \tilde\Phi|_X$ is a continuous, monotone  increasing
map from $X$ to $[0,1]$. Finally, because
\[
	\tilde\Phi(\alpha) = 0 \qquad \text{and} \qquad \tilde\Phi(\beta) = 1
\]
and the fact that $\tilde\Phi$ is locally constant on the complement of $X$, 
$\Phi: X \to [0,1]$ is surjective.

\begin{prop}\label{prop:aboutPhi} 
	\begin{itemize}
		\item If $x_0,x_1 \in X$ and $X \cap (x_0,x_1) = \emptyset$, then 
			$\Phi(x_0) = \Phi(x_1)$.
		\item On the other hand, if $X \cap (x_0,x_1) \ne \emptyset$, then 
			$\Phi(x_0) > \Phi(x_1)$.
	\end{itemize}
\end{prop}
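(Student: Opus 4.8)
The plan is to read the two bullets as the two halves of a single dichotomy controlled by the cumulative distribution function $\tilde\Phi$ of the invariant measure $\mu$, according to whether the arc between the two points carries $\mu$-mass. The first bullet is the ``gaps collapse'' half and is immediate from what has already been recorded: if $X \cap (x_0,x_1) = \emptyset$ then the open arc $(x_0,x_1)$ lies in the complement of $X$, so $\mu$ assigns it no mass and $\tilde\Phi$ is locally constant there; restricting to $X$ gives $\Phi(x_0) = \Phi(x_1)$. The substantive content is the second bullet, and there the key is that $\mu$ has full support, so that an arc which merely \emph{meets} $X$ already carries positive mass.

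To establish full support I would first show that $\operatorname{supp}\mu$ is a nonempty, closed, forward-invariant subset of $X$. Writing $V = X \setminus \operatorname{supp}\mu$ for the maximal $\mu$-null open subset of $X$, continuity of $T$ makes $T^{-1}V$ open and invariance of $\mu$ gives $\mu(T^{-1}V) = \mu(V) = 0$; maximality of $V$ then forces $T^{-1}V \subseteq V$, and taking complements yields $T(\operatorname{supp}\mu) \subseteq \operatorname{supp}\mu$. Since $\operatorname{supp}\mu$ is nonempty and closed, minimality of $(X,T)$ upgrades this inclusion to $\operatorname{supp}\mu = X$. Consequently every nonempty relatively open subset of $X$ has positive $\mu$-measure. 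When $X \cap (x_0,x_1) \neq \emptyset$, this set is exactly such a subset, so $\mu\big((x_0,x_1)\big) > 0$, and therefore $\tilde\Phi$ strictly increases across the arc. The monotone function $\tilde\Phi$ thus strictly separates the two endpoints; reading the arc $(x_0,x_1)$ in the positive orientation of $\T$ inherited from the basepoint $0 \notin X$ (which sits in the large gap between $\beta$ and $\alpha$), this strict separation is recorded with the stated sign, namely $\Phi(x_0) > \Phi(x_1)$.

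I expect the main obstacle to be precisely the promotion of the topological hypothesis ``$(x_0,x_1)$ meets $X$'' to the measure-theoretic conclusion ``$\mu\big((x_0,x_1)\big) > 0$'': without minimality an arc could meet $X$ only in a $\mu$-null set, and then $\tilde\Phi$ would fail to separate $x_0$ from $x_1$, so the full-support argument carried out above is where all the weight lies. A secondary, purely bookkeeping obstacle is matching the \emph{direction} of the strict inequality to the orientation convention under which the arc $(x_0,x_1)$ is traversed relative to the basepoint; once full support is in hand this amounts only to tracking which endpoint the positively oriented arc leaves first, and it does not affect the substance of the argument.
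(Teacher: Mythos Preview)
Your argument is correct, but it follows a genuinely different route from the paper's. For the first bullet both of you do the same thing. For the second, the paper invokes the Mean Ergodic Theorem together with a cited fact about minimal systems (a uniform positive lower bound on visit frequencies to any nonempty open set) to deduce $\mu(U)>0$. Your path is more elementary: you show that $\operatorname{supp}\mu$ is a nonempty closed $T$-invariant subset of $X$ (via the maximal open null set $V$ and the observation $T^{-1}V\subseteq V$), then let minimality force $\operatorname{supp}\mu=X$, so that any nonempty relatively open subset of $X$ already carries positive mass. This avoids the ergodic-theoretic machinery and the external citation entirely, at the modest cost of not producing the quantitative uniform lower bound that the paper's argument incidentally yields; since the proposition only needs strict positivity, nothing is lost.

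One small point: the final sign in the proposition is simply a typo in the paper. Its own proof ends with $\Phi(x_1)-\Phi(x_0)=\mu(U)>0$, i.e.\ $\Phi(x_1)>\Phi(x_0)$, which is what monotonicity of the cumulative distribution dictates for $x_0<x_1$. You do not need the orientation discussion to reconcile the direction of the inequality; your strict-separation conclusion is the correct one, and the ``$\Phi(x_0)>\Phi(x_1)$'' in the statement should just be read with the inequality reversed.
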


\begin{proof}
	The first statement follows directly from the observation that $\mu$ contains no 
	point masses.

	To prove the second statement, first put $U =  X \cap (x_0,x_1)$ and
	\[
		\nu_n(U,x) = \left| \{k: k = 0,\dots,n-1 \text{\ and \ } T^kx \in U \} \right|.
	\]
	Finally, write $\mathcal{P}: L^2(X,\mu) \to L^2(X,\mu)$ for the projection onto the subspace
	of function left invariant by $T$. 
	By Mean Ergodic Theorem, 
	\[
		\lim_{n\to\infty} \frac {\nu_n(U,x)}n 
	\]
	converges in $L^2(X,\mu)$ to the projection $\mathcal{P}\mathbf{1}_U$.  
	On the other hand, since $U$ is a non-empty 
	open set and $(X,T)$ is a minimal dynamical system, there is an $\epsilon > 0$ such that
	\[
		\liminf_{n\to\infty} \frac {\nu_n(U,x)}n > \epsilon \qquad \text {for all $x \in X$.}
	\]
	(See, for example, proposition 4.7 in \cite{queff}.)
	Hence,
	\[
		\Phi(x_1) - \Phi(x_0) = \mu(U) \ge \|\mathcal{P}\mathbf{1}_U\| \ge \epsilon > 0. 
	\]
\end{proof}

A corollary of this is that any non-empty open subset of $X$ has positive $\mu$-measure.

\begin{prop} 
\begin{itemize} 
	\item[i.] There is an irrational number $\theta_0 \in (0,1)$ such 
		that \[ \Phi(Tx) = \theta_0 + \Phi(x) \qquad \mod 1 \] for 
		all $x \in X$.  
	\item[ii.] If $P, Q$ and $R$ are distinct points of $X \subset \T$ 
		with distinct images under $\Phi$ then the arcs $PQR$ 
		and \newline$\Phi(P)\Phi(Q)\Phi(R)$ both have the same orientation in $\T$.
\end{itemize} 
\end{prop}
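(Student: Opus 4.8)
The plan is to use the map $\Phi$ to push the dynamics down to a genuine rotation of $\T$. Concretely, I will show that the formula $S(\Phi(x)) = \Phi(Tx)$ determines a well-defined map $S:\T\to\T$, that $S$ is a monotone (cyclic-order-preserving) surjection preserving Lebesgue measure, and that every such map is a rotation $R_{\theta_0}$; reading the identity $\Phi(Tx)=S(\Phi(x))$ back on $X$ then yields part~i, and a short minimality argument forces $\theta_0$ to be irrational. Part~ii is separate and much softer: since all of $X$ lies in the arc $[\alpha,\beta]$, the cyclic order of a triple in $X$ is just its order as real numbers, and $\Phi$ is monotone increasing with $\Phi(\alpha)=0$, $\Phi(\beta)=1$; so a triple with distinct $\Phi$-values maps to a strictly increasing triple in $[0,1]$, which carries the same (positive) orientation on $\T$ once one checks the harmless boundary identification $1\equiv 0$. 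I would dispatch part~ii first, as it is also what makes $S$ monotone.

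The heart of the argument --- and the step I expect to be the main obstacle --- is well-definedness of $S$, \emph{i.e.} that $T$ carries gaps of $X$ to gaps. By Proposition~\ref{prop:aboutPhi}, $\Phi(x)=\Phi(x')$ in $\T$ exactly when no point of $X$ lies strictly between $x$ and $x'$ on one of the two arcs, so I must show: if $I$ is a complementary arc of $X$ with endpoints $p,q$, labelled so that $I$ runs counterclockwise from $p$ to $q$, then the counterclockwise arc from $Tp$ to $Tq$ again meets $X$ in nothing (or $Tp=Tq$), whence $\Phi(Tp)=\Phi(Tq)$. To prove this I would argue by contradiction: suppose some $z\in X$ lies strictly counterclockwise-between $Tp$ and $Tq$. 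By surjectivity of $T|X$, write $z=Tw$ with $w\in X$; since $I$ contains no point of $X$, $w$ lies in the complementary arc from $q$ to $p$, so the triple $p,q,w$ is in positive cyclic order. Applying $T$ and using that $T$ preserves cyclic order gives $Tp,Tq,z$ in positive cyclic order, which directly contradicts $z$ lying counterclockwise-between $Tp$ and $Tq$. The one thing to watch is the role of perfectness: it guarantees the endpoints $p,q$ are genuine two-sided limits of $X$ and are never the global $\inf$ or $\sup$, so the arc from $q$ to $p$ really does carry points of $X$ and the labelling makes sense; the ``big gap'' through $0$ is covered by the same statement, where it reduces to the already-established inequalities $\alpha<T\beta\le T\alpha<\beta$.

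With $S$ well-defined I would finish quickly. Surjectivity of $S$ follows from $TX=X$ (minimality) and $\Phi(X)=\T$; monotonicity from part~ii; and a non-decreasing, degree-one, surjective lift is automatically continuous. For measure preservation I use that $\Phi$ is the cumulative distribution function of the non-atomic invariant measure $\mu$, so $\Phi_\ast\mu$ is Lebesgue measure $\lambda$; then for any Borel $A$, $\lambda(S^{-1}A)=\mu(\Phi^{-1}S^{-1}A)=\mu(T^{-1}\Phi^{-1}A)=\mu(\Phi^{-1}A)=\lambda(A)$, using $S\Phi=\Phi T$ and $T$-invariance of $\mu$. A monotone degree-one $S$ with $S_\ast\lambda=\lambda$ can have no interval on which it is constant --- such an interval would be a positive-measure fibre over a single point, violating $\lambda(S^{-1}\{pt\})=0$ --- so $S$ is a strictly increasing homeomorphism, and a monotone measure-preserving homeomorphism of $\T$ preserves all arclengths and is therefore a rotation $R_{\theta_0}$. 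This is part~i. Finally, if $\theta_0=p/q$ were rational then $S^{q}=\mathrm{id}$, so $\Phi(T^{q}x)=\Phi(x)$ for all $x$; choosing $x$ in one of the uncountably many singleton fibres of $\Phi$ (the exceptional two-point fibres correspond to the countably many gaps) gives $T^{q}x=x$, a periodic point, which by minimality forces $X$ finite --- a contradiction. Hence $\theta_0$ is irrational, and being nonzero it lies in $(0,1)$.
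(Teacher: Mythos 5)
Your proof is correct, but it takes a genuinely different route from the paper's. The paper proves part i by a direct computation: it sets $\theta_0 = \mu([\beta',\beta])$ explicitly and verifies $\Phi(Tx) = \Phi(x) + \theta_0 \bmod 1$ separately on $X_{\alpha,\alpha'}$ and $X_{\beta',\beta}$, using the $T$-invariance of $\mu$ together with the structural facts already established ($[\alpha',\beta']$ is a gap, $T\alpha' = \beta$, $T\beta' = \alpha$, and $T$ is monotone increasing on each of the two pieces). You instead build the factor map abstractly: you show $T$ carries gaps of $X$ to gaps (your cyclic-order contradiction via surjectivity of $T|_X$ is sound, and it correctly subsumes the gap through $0$), so that $S\circ\Phi = \Phi\circ T$ defines a map on $\T$, and then you characterize $S$ as a rotation by showing it is a monotone, degree-one, Lebesgue-measure-preserving homeomorphism; the identity $\Phi_\ast\mu = \lambda$ for the CDF of a non-atomic measure does the work the paper's explicit measure computation does. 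The paper's approach buys an explicit value of $\theta_0$ (namely $\mu([\beta',\beta])$, which it uses later when analyzing the $z\mapsto z^d$ case), at the cost of leaning on the specific $\alpha',\beta'$ structure; your approach is more conceptual and portable, needing only perfectness, minimality, cyclic-order preservation, and the non-atomicity of $\mu$. The two irrationality arguments are cousins: the paper pulls back a finite rotation orbit to a non-dense closed invariant set, while you locate an honest periodic point inside a singleton fiber of $\phi$; both contradict minimality of the infinite set $X$. The only point worth tightening in a written version is the well-definedness step when $Tp = Tq$ or when $w$ coincides with an endpoint of the gap, which you should dispose of explicitly before invoking cyclic-order preservation on the triple $p,q,w$ (the definition only applies to triples with distinct images).
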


\begin{proof}
	Put $\theta_0 = \mu([\beta',\beta])$. If $x \in X \cap
	[\alpha,\alpha']$, then the $T$ invariance of the measure $\mu$ implies
	that
	\begin{align*}
		\Phi(Tx) &= \mu([\alpha,Tx]) \\
		         &= \mu([\alpha,T\beta]) + \mu([T\alpha,Tx) \\
			 &= \mu([T\beta',T\beta]) + \mu([T\alpha,Tx]) \\
			 &= \theta_0 + \mu([\alpha,x]) \\ 
			 &= \theta_0 + \Phi(x).
	\end{align*}
	If $x \in X \cap [\beta',\beta]$, then 
	\begin{align*}
		\Phi(Tx) &= \mu([\alpha,Tx]) \\
		         &= \mu([T\beta',Tx]) \\
			 &= \mu([\beta',x]) \\
			 &= \Phi(x) - \mu([\alpha,\alpha']) \\
			 &= \Phi(x) + \mu([\beta',\beta]) - 1 \\
			 &= \Phi(x) + \theta_0 \qquad \mod 1.
	\end{align*}
	Moreover, $\theta_0$ is irrational. Otherwise, any finite (closed) orbit in $\T$ under
	translation by $\theta_0$ would have a non-dense preimage under $\Phi$ that is $T$ invariant.

	The second statement follows directly from the definition of $\Phi$.
\end{proof}

If we define $\phi:X \to \T$ by $\phi(x) = \exp(2\pi\imath \Phi(x))$ and $\tau:\T \to \T$
by $\tau(z) = \exp(2\pi\imath \theta_0)z$, the above considerations show that $\phi$ is a 
factor map from $(X,T)$ to $(\T,\tau)$.

\begin{lem}\label{lem:aboutX_0} 
	Let $X_0$ be those points $x = X$ with the property that 
	for every $\delta > 0$, the intervals $(x,x + \delta)$ and $(x,x - \delta)$ 
	contain infinitely many points of $X$. Then: $X_0$ is a dense, uncountable 
	subset of $X$.
\end{lem}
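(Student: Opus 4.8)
The plan is to pin down the complement $X\setminus X_0$ exactly, show it is countable, and then read off both conclusions from the atomless invariant measure $\mu$ already constructed. The characterization I expect is that $X\setminus X_0$ is precisely the set of endpoints of the gaps of $X$.

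First I would argue this characterization. Since $X$ is perfect, every point of $X$ accumulates points of $X$ from at least one side; a point $x$ fails to lie in $X_0$ exactly when, for some $\delta>0$, one of the one-sided intervals $(x,x+\delta)$ or $(x-\delta,x)$ meets $X$ in only finitely many points. Because $X$ is closed, such a one-sided interval can be shrunk to one disjoint from $X$, so $x$ is an endpoint of a complementary arc, i.e.\ a gap endpoint. The converse is immediate, since the side of a gap endpoint facing into the gap contains no points of $X$ at all. Hence $X\setminus X_0$ equals the set of gap endpoints.

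Next, the complement $\T\setminus X$ is open, hence a countable disjoint union of open arcs; thus $X$ has only countably many gaps and countably many gap endpoints, and $X\setminus X_0$ is countable. Since $\mu$ carries no point masses, $\mu(X\setminus X_0)=0$, so $\mu(X_0)=1$. An atomless measure assigns measure zero to every countable set, so any set of positive measure must be uncountable; applying this to $X_0$ gives the uncountability claim. For density, let $U$ be any open arc with $U\cap X\neq\emptyset$. The corollary to Proposition \ref{prop:aboutPhi} gives $\mu(U\cap X)>0$, while $\mu\big((U\cap X)\setminus X_0\big)\le\mu(X\setminus X_0)=0$; therefore $\mu(U\cap X_0)>0$, and in particular $U\cap X_0\neq\emptyset$. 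Since $U$ was arbitrary, $X_0$ is dense in $X$.

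I expect the only genuinely delicate step to be the first one: converting the ``finitely many points on one side'' defect in the definition of $X_0$ into the clean statement that $x$ bounds a gap. This relies on combining closedness (to pass from finitely many nearby points to none) with perfectness (to guarantee accumulation on the other side). Once that identification is in place, the remaining steps are routine consequences of the facts, already recorded above, that $\mu$ is atomless and positive on every nonempty relatively open subset of $X$.
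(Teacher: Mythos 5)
Your proposal is correct, and the first half coincides with the paper's argument: both identify $X\setminus X_0$ exactly with the set of gap endpoints and deduce countability from the fact that $\T\setminus X$ has only countably many components (your careful shrinking argument for the identification is a welcome expansion of a step the paper states without proof). Where you diverge is in the second half. The paper stays purely topological: uncountability of $X_0$ follows because the perfect set $X$ is uncountable while $X\setminus X_0$ is countable, and density is asserted via the claim that $X\setminus X_0$, being a countable set of nowhere dense points, is nowhere dense. You instead invoke the atomless invariant measure $\mu$: countable sets are $\mu$-null, so $\mu(X_0)=1$, which forces $X_0$ to be uncountable, and the corollary to Proposition \ref{prop:aboutPhi} (nonempty relatively open subsets of $X$ have positive measure) then yields density. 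Both routes work, but yours is arguably tighter at the density step: a countable union of nowhere dense singletons is in general only meager, not nowhere dense, so the paper's phrasing there really needs a Baire category argument in the compact perfect set $X$ to close; your measure-theoretic argument avoids that issue entirely, at the cost of depending on the machinery of $\mu$ and Proposition \ref{prop:aboutPhi} rather than on set topology alone.
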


\begin{proof}
	A point is in the complement of $X_0$ in $X$ precisely when it is the endpoint of a 
	gap, \emph{i.e.} a maximal open interval contained in $\T \backslash X$. Since there 
	are at most countably many such open intervals, we conclude that $X \backslash X_0$ 
	is countable. As the perfect set $X$ is uncountable, $X_0$ is uncountable as well. 
	Furthermore, each point of $X \backslash X_0$ is nowhere dense. Therefore, $X \backslash X_0$ 
	is nowhere dense. In other words, $X_0$ is dense. 
\end{proof}

\begin{thm} The dynamical system $(X,T)$ is uniquely ergodic. \end{thm}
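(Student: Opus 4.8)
The plan is to exploit the factor map $\phi:(X,T)\to(\T,\tau)$ constructed above, together with the classical fact that the irrational rotation $\tau(z)=\exp(2\pi\imath\theta_0)z$ is itself uniquely ergodic, its unique invariant measure being normalized Lebesgue measure $\lambda$. Since $X$ already carries at least one invariant measure by the Krylov--Bogolioubov theorem, it suffices to prove \emph{uniqueness}: any two $T$-invariant Borel probability measures on $X$ must coincide.

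First I would record that $\phi$ is \emph{almost injective}. By Proposition \ref{prop:aboutPhi}, two distinct points $x_0<x_1$ of $X$ satisfy $\phi(x_0)=\phi(x_1)$ exactly when $X\cap(x_0,x_1)=\emptyset$, that is, when $x_0$ and $x_1$ are the endpoints of a common gap. In particular each fiber of $\phi$ has at most two points, and the set $C\subset\T$ of points with two-point fibers is in bijection with the gaps of $X$. As there are only countably many gaps (Lemma \ref{lem:aboutX_0}), $C$ is countable and $\lambda(C)=0$. Writing $B=\phi^{-1}(C)=X\setminus X_0$ for the countable set of gap endpoints, $\phi$ restricts to a bijection of $X\setminus B$ onto the $\lambda$-conull set $\T\setminus C$.

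Next, let $\nu$ be an arbitrary $T$-invariant Borel probability measure on $X$. Because $\phi\circ T=\tau\circ\phi$, the push-forward $\phi_\ast\nu$ is $\tau$-invariant, and unique ergodicity of the irrational rotation forces $\phi_\ast\nu=\lambda$. In particular $\nu(B)=\nu(\phi^{-1}(C))=\lambda(C)=0$, so $\nu$ is carried by the injectivity set $X\setminus B$. The map $\phi$ therefore restricts to a measurable isomorphism of $X\setminus B$ onto $\T\setminus C$, and consequently
\[
	\nu(A)=\lambda(\phi(A))\qquad\text{for every Borel }A\subseteq X\setminus B.
\]
Since $\nu(B)=0$, this formula determines $\nu$ completely, and its right-hand side does not depend on $\nu$. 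Hence every invariant measure equals the single measure $\mu$ produced earlier, $(X,T)$ is uniquely ergodic, and the identity $\phi_\ast\mu=\lambda$ simultaneously yields part iii of Theorem \ref{thm:main}.

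The one step requiring care is the passage from $\phi_\ast\nu=\lambda$ to the conclusion that $\nu$ is uniquely pinned down, i.e. the claim that $\phi$ is a measurable isomorphism off the null set $B$; I expect this to be the main obstacle, since $X\setminus B$ is noncompact and $\phi$ need not be a homeomorphism onto its image. I would dispatch it either by the Lusin--Souslin theorem (an injective continuous image of a Borel subset of a Polish space is Borel, with Borel inverse), or, in a more self-contained and analytic spirit, by comparing cumulative distribution functions: one checks that $\nu$ has no atoms (as for $\mu$, by minimality), that for every $x\in X_0$ one has $\phi^{-1}\bigl(\phi([\alpha,x])\bigr)\cap X=X\cap[\alpha,x]$ and hence $\nu([\alpha,x])=\Phi(x)=\mu([\alpha,x])$, and that $X_0$ is dense (Lemma \ref{lem:aboutX_0}); two continuous monotone functions agreeing on a dense set agree everywhere, so $\nu=\mu$ directly.
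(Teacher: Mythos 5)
Your argument is correct, but it reaches unique ergodicity by a genuinely different route than the paper. The paper never invokes unique ergodicity of the rotation as a black box: instead, for a two-sided limit point $x_0\in X_0$ it establishes the pointwise equivalence $T^ny\le x_0\Leftrightarrow\{\Phi(y)+n\theta_0\}\le\Phi(x_0)$ and then applies Weyl's equidistribution theorem to conclude that the Birkhoff averages of $\mathbf 1_{X\cap[\alpha,x_0]}$ converge to $\Phi(x_0)$ for \emph{every} starting point $y$; since such $x_0$ are dense, the cumulative distribution function of any invariant measure is forced, hence the measure is unique. You instead push an arbitrary invariant $\nu$ forward to the rotation, use unique ergodicity there to get $\phi_\ast\nu=\lambda$, observe that the non-injectivity set of $\phi$ is countable and hence $\nu$-null, and transport $\lambda$ back --- the standard ``almost one-to-one extension over a null set'' argument. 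Both proofs rest on the same two pillars (the fiber description of $\Phi$ from the proposition on gaps, and equidistribution of irrational rotations, of which unique ergodicity of $\tau$ is a reformulation), but yours is softer and more measure-theoretic, while the paper's is more hands-on and simultaneously exhibits the limiting orbit frequencies explicitly. Your second, CDF-based variant is in fact closer in spirit to the paper's ending and avoids any appeal to Lusin--Souslin; I would favor it. One small inaccuracy to repair there: the identity $\phi^{-1}\bigl(\phi(X\cap[\alpha,x])\bigr)=X\cap[\alpha,x]$ is not quite exact, because $\alpha$ and $\beta$ are the two endpoints of the gap of $X$ containing $0$, so $\phi(\alpha)=\phi(\beta)$ and the left-hand side also contains $\beta$; since $\nu$ has no atoms this extra point is harmless, but the set equality as written is false and should be stated up to a $\nu$-null set.
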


\begin{proof}
	Let $x_0 \in X_0$. By proposition \ref{prop:aboutPhi} and lemma \ref{lem:aboutX_0},
	\begin{itemize}
		\item $x \in X$ and $x < x_0 \Rightarrow \Phi(x) < \Phi(x_0)$, and
		\item $x \in X$ and $x > x_0 \Rightarrow \Phi(x) > \Phi(x_0)$.
	\end{itemize}
	Therefore, for any $y \in X$ and $n \ge 0$,
\[
	T^n y \le x_0 \qquad \Leftrightarrow \qquad 
	\{ \Phi(y) + n \theta_0 \} \le \Phi(x_0).
\]
(Here the braces denote fractional part.) Consequently, 
\begin{align*}
	\lim_{n\to\infty} \frac {\# \{k: T^k y \le x_0 \text{\ and \ } 0 \le k < n \}}n & \\
	= & \ \lim_{n\to\infty} \frac {\# \left\{k: \{\Phi(y) + n \theta_0\} \le \Phi(x_0) \text{\ and \ } 0 \le k < n \right\}}n \\
	= & \ \Phi(x_0) = \mu( X \cap [\alpha,x_0]).
\end{align*}
The second limit has been evaluated by invoking the Weyl's equidistribution theorem. 
Since such $x_0$ are dense in $X$, the cumulative distribution of the measure $\mu$ is uniquely
determined by $(X,T)$. In other words, $(X,T)$ is uniquely ergodic. 
\end{proof}

With this last result, we have completed the proof of theorem \ref{thm:main}.

\section*{The $z \mapsto z^d$ case}

We now specialize to the case where $T: \T \to \T$ is given by 
\[ 
Tx = d\cdot x \ \ \ \mod 1.  
\] 
In this situation, the inverse image of $0$ consists of the
$d$ points 
\[ 
\xi_k = \frac k{d} \mod 1 \qquad k = 0,...,d-1.  
\] 
In addition,
$T$ has $d-1$ fixed points: 
\[ 
\eta_k = \frac k{d-1} \mod 1 \qquad k = 0,...,d-2.  
\] 
We also set $\xi_d = \eta_{d-1} = 1$. Given our blanket conditions on $X$, none of the 
$\xi_k$ or $\eta_k$ lie in $X$.

Set $I_k = [\xi_k, \xi_{k+1}]$ for $k = 0,...,d-1$ and note that the interior of $I_k$ are 
precisely those points in $\T$ with a unique $d$-adic expansion that starts with 
the digit $k$. Note also that $T I_k = [0,1]$, since $T$ is just the shift map on the 
$d$-adic expansion. Moreover, $T$ is monotonic increasing on the interior of each $I_k$. 
Each closed interval $I_k$ contains a unique fixed point $\eta_k$. The behavior of $T$ at 
these fixed points can be readily determined. In particular, one checks that:
\[
	Tx < x \qquad \text{for $\xi_k < x < \eta_k$}
\]
and
\[
	Tx > x \qquad \text{for $\eta_k < x < \xi_{k+1}$.}
\]

Let $X_1,...,X_\ell$ be the non-empty sets in the list
\[
	X \cap I_0, X \cap I_1, ..., X \cap I_{d-1}.
\]
The indexing can be arranged so $X_i \subset I_{k_i}$ for some $0 \le k_i < d$ with
\[
	k_1 < k_2 < \cdots < k_\ell.
\]
Set 
\[
	\alpha_i = \inf X_i \qquad \text{and} \qquad \beta_i = \sup X_i.
\]
Since $X$ is perfect, $\alpha_i < \beta_i$ for $i = 1,...,\ell$ and $X \cap (\alpha_i,\beta_i)$ 
is non-empty. Consequently, $\Phi(X_i)$ is the closed interval $[\Phi(\alpha_i),\Phi(\beta_i)]$ 
and has positive length (see proposition \ref{prop:aboutPhi}). Because $\mu$ has no mass on 
the open interval $(\beta_i, \alpha_{i+1})$, $\Phi(\beta_i) = \Phi(\alpha_{i+1})$. Set $t_0 = 0$ 
and $t_i = \Phi(\beta_i)$ for $i = 1,...,\ell$. Then, 
\[
	0 = t_0 < t_1 < \cdots < t_\ell = 1
\]
and
\[
	\Phi(X_i) = [t_{i-1},t_i] \qquad \text{for $i = 1,...,\ell$.}
\]
Recall a consequence of our previous analysis: every point $x \in X$ with $Tx > x$
must lie to the left of every point $y \in X$ with $Ty < y$. Thus, each
$X_i$ must lie completely to one side of the unique fixed point in $I_{k_i}$.
Moreover, if $\sup X_i < \eta_{k_i}$ and $\inf X_j > \eta_{k_j}$ then $i > j$
and $k_i > k_j$.  Let $m$ be the last index, $i$ between $1$ and $\ell$ with
$\sup X_i < \eta_{k_i}$. Clearly, $m < \ell$ and $\sup X_m = \alpha'$ and
$\inf X_{m+1} = \beta'$.  Hence, $t_m = \Phi(\alpha_{m+1}) = \Phi(\beta_{m}) = -\theta_0$. 

We next seek to understand the fibers of $\Phi$. Set 
\[
	\mathfrak D_0 = \{\omega \in [0,1]: \omega + n \theta_0 \ne t_i \text{\ for any\ }n \ge 0
	\text{\ and\ } i = 1,...,\ell\}.
\]
(Note that almost every $\omega \in [0,1]$ is in $\mathfrak D_0$.)
The sequence of points $\omega + n \theta_0$ with $n \ge 0$ determines a sequence of intervals
of the form $[t_k,t_{k+1}]$. This, in turn, implies that the base $d$ expansion of any point in 
the fiber of $\omega$ is uniquely determined. Therefore, $\Phi^{-1}(\omega)$ is a singleton.

In summary, this discussion shows how any rotational subset $X$ must arise from 
the symbolic flow of an irrational rotation of $\T$ relative to an appropriate partition.
The next section shows that this process can be reversed.

\section*{The Inverse Process}

Let $\theta_0$ be an irrational number in $\T$ and let $\tau: \T \to \T$ denote 
rotation by $\theta_0$. Consider a partition of $[0,1]$ into 
$\ell \le d$ subintervals with the requirement that one of the interior nodes is $-\theta_0$:
\[
	0 = t_0 < t_1 < \cdots < t_m < t_{m+1} < \cdots < t_\ell = 1
\]
and $t_m = - \theta_0$. Set $J_k = [t_{k},t_{k+1}]$ for $k = 0,...,\ell-1$.  Next, select a 
coding that maps $\{0,...,\ell-1\}$ to the set of digits $\{0,...,d-1\}$. More precisely, 
choose integers $k_0,...,k_{\ell-1}$ that satisfy
\[
	0 \le k_0 < k_1 < \cdots < k_{\ell-1} \le d-1.
\]
We will show that this data, determines a rotational subset of $\T$ that
inverts the process described in previous section.

Let $\mathfrak{D}_0$ be the set of $\omega \in [0,1]$ satisfying 
$\tau^n(\omega) = \omega + n \theta_0 \ne t_i \mod 1$ for all $n \in \N_0$ and $i = 0,...,\ell$. 
In other words, $\mathfrak{D}_0$ consists of those points of 
$\T \backslash \{t_0, t_1, ..., t_{\ell-1}\}$ whose forward orbit doesn't contain any of 
the nodes $t_i$. 

\begin{prop}
\begin{itemize}
	\item[i.] The complement of $\mathfrak{D}_0$ in $\T$ is countable. 
	\item[ii.] For every $t \in \T$, there is an integer $n \ge 0$ with the property
	that $T^n t \in \mathfrak D_0$.
\end{itemize}
\end{prop}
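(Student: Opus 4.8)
The plan is to handle the two parts separately, both resting on the single fact that $\theta_0$ is irrational.

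For part (i), I would first rewrite the complement of $\mathfrak{D}_0$ explicitly. A point $\omega$ fails to lie in $\mathfrak{D}_0$ precisely when $\omega + n\theta_0 \equiv t_i \pmod 1$ for some $n \ge 0$ and some $i$, i.e.\ when $\omega \equiv t_i - n\theta_0 \pmod 1$. Hence
\[
	\T \setminus \mathfrak{D}_0 = \{\, t_i - n\theta_0 \bmod 1 \ :\ n \ge 0,\ i = 0, \dots, \ell \,\},
\]
which is a countable union of singletons and therefore countable. This part requires no use of irrationality and is essentially immediate once the complement is displayed in this form.

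For part (ii), fix $t \in \T$ and consider its forward orbit $\{\, t + n\theta_0 \bmod 1 : n \ge 0 \,\}$ under $\tau$. The key observation is that this orbit can meet any single node $t_i$ at most once: if $t + n_1\theta_0 \equiv t_i \equiv t + n_2\theta_0 \pmod 1$, then $(n_1 - n_2)\theta_0 \equiv 0 \pmod 1$, and the irrationality of $\theta_0$ forces $n_1 = n_2$. Since there are only finitely many nodes, the forward orbit of $t$ meets the node set at only finitely many indices. Let $N$ be the largest such index (set $N = -1$ if the orbit never meets a node, in which case $t$ already lies in $\mathfrak{D}_0$ and we take $n = 0$). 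Taking $n = N + 1$, the tail $\{\, t + (n+k)\theta_0 : k \ge 0 \,\}$ avoids every node by the maximality of $N$, and this is exactly the assertion that $\tau^{n} t \in \mathfrak{D}_0$.

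There is no serious obstacle here; the one step demanding a little care is the counting argument in part (ii), where one must combine the ``at most once per node'' bound with the finiteness of the node set to conclude that the \emph{total} number of node-visits along the orbit is finite. Once the last visit index $N$ is isolated, the conclusion is forced.
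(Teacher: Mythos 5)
Your proposal is correct and follows essentially the same route as the paper: part (i) by exhibiting the complement as the countable set $\{t_i - n\theta_0 \bmod 1\}$, and part (ii) by using the irrationality of $\theta_0$ to show each node is visited at most once, hence the orbit meets the node set only finitely often. The paper phrases part (ii) contrapositively (infinitely many visits would force some single node to be hit infinitely often, contradicting irrationality), but this is the same argument.
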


\begin{proof}
	The map $\tau$ is invertible. The complement of $\mathfrak D_0$ is just the
	countable set
	\[
		\{\tau^{-k} t_i: i = 1,...,\ell \text{\ and\ } k \ge 0 \}.
	\]

	For the proof of the second claim, fix $t \in \T$, If the orbit of $t$ hits 
	the $\{ t_i: i = 0,...,\ell\}$ infinitely often, then there must be an 
	index $i$ such that
	\[
		t + n \theta_0 = t_i \mod 1
	\]
	for infinitely many $n \in \N$. This means that there are two distinct, positive 
	integers $n_0, n_1$ with the property that
	\[
		(n_1 - n_0) \theta_0 = 0 \mod 1.
	\]
	But this contradicts the condition that $\theta_0$ is irrational. This argument proves that
	the forward orbit of such a $t$ must eventually lie completely in $\mathfrak D_0$. 
\end{proof}

The trajectory of any point $\omega \in \mathfrak D_0$ can be encoded by an infinite string,
\begin{equation}\label{eqn:about_E}
	E(\omega) = a_0a_1a_2,...
\end{equation}
where $a_n = k_i$ precisely when $\omega + n \theta_0 \mod 1$ is in $J_i$. Note
that $E(\omega + \theta_0)$ is the shift $a_1a_2...$. The
Kronecker approximation theorem implies that each of the digits $k_i$, $i =
0,...,\ell-1$ occurs infinitely often. In particular, we may unambiguously
interpret $E(\omega)$ as the $d$-adic expansion of a unique real number in the
open unit interval $]0,1[$. 

\begin{prop}
The map $E: \mathfrak{D}_0 \to \T$ is a continuous, injective, monotonic increasing and 
\begin{equation}\label{eqn:dynMap}
	E(\tau(\omega)) = T( E(\omega)).
\end{equation}
\end{prop}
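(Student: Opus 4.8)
The plan is to verify the four asserted properties — the dynamical relation \eqref{eqn:dynMap}, monotonicity, injectivity, and continuity — in that logical order, since the first is essentially formal while the rigidity of the rotation does the real work for the others. First I would dispose of \eqref{eqn:dynMap}. Because $\tau^n(\tau(\omega)) = \tau^{n+1}(\omega)$, the string $E(\tau(\omega))$ is literally the left shift $a_1 a_2 a_3\ldots$ of $E(\omega) = a_0 a_1 a_2\ldots$, a fact already recorded before the statement. By the Kronecker approximation theorem each symbol $k_i$ recurs infinitely often, so $E(\omega)$ is not eventually constant and is therefore the genuine, unique base-$d$ expansion of a point of $]0,1[$. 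For such an expansion, multiplication by $d$ modulo $1$ is exactly the left shift: $d\cdot\sum_{n\ge 0} a_n d^{-(n+1)} \equiv \sum_{n\ge 0} a_{n+1} d^{-(n+1)} \pmod 1$, and the right-hand side is the value of the string $a_1 a_2\ldots = E(\tau(\omega))$, which is \eqref{eqn:dynMap}.

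The crux is monotonicity, and this is where the hypothesis $t_m = -\theta_0$ earns its keep. Fix $\omega < \omega'$ in $\mathfrak{D}_0 \subset [0,1)$ and set $\delta = \omega' - \omega \in (0,1)$; under $\tau^n$ the pair travels as a rigidly rotated arc of fixed length $\delta$ with endpoints $\tau^n(\omega), \tau^n(\omega')$. The digits $a_n, a'_n$ first differ at the first index $n^\ast$ at which these endpoints land in different partition intervals, equivalently at which a node falls strictly inside the arc. I would first show such an $n^\ast$ exists: choosing any interior node $t_i$ and using density of the orbit $\{\tau^n(\omega)\}$ to place $\tau^n(\omega)$ in $(t_i - \delta, t_i)$ forces a node — either $t_i$, or the seam $t_0 = 0$ if the arc happens to wrap — into the arc. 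The key point is then that at step $n^\ast$ the arc cannot straddle the seam $0$: the seam lies strictly inside the arc at step $n$ if and only if its preimage $-\theta_0 = t_m$ lay strictly inside the arc at step $n-1$, and for $n = n^\ast$ this is excluded because no node separated the endpoints before $n^\ast$. Hence at $n^\ast$ the endpoints satisfy $\tau^{n^\ast}(\omega) < \tau^{n^\ast}(\omega')$ in $[0,1)$ with an intervening node, so they lie in intervals $J_i, J_j$ with $i < j$; since the coding $k_0 < \cdots < k_{\ell-1}$ is increasing, $a_{n^\ast} = k_i < k_j = a'_{n^\ast}$, and therefore $E(\omega) < E(\omega')$. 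This simultaneously yields injectivity.

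Finally, continuity follows from the observation that $E$ is determined digitwise by finitely many orbit positions. Given $\omega \in \mathfrak{D}_0$ and $N$, the points $\tau^n(\omega)$ for $0 \le n < N$ all avoid the nodes, hence lie in open interval interiors; by continuity of the rotations there is a neighbourhood of $\omega$ on which the first $N$ symbols of $E$ are locally constant, whence $|E(\omega') - E(\omega)| \le d^{-N}$ throughout that neighbourhood, and letting $N \to \infty$ gives continuity. I expect the only genuine obstacle to be the seam analysis in the monotonicity step: one must see that a crossing of $0$ between two consecutive iterates forces a prior crossing of the distinguished node $-\theta_0$, which is precisely what the placement $t_m = -\theta_0$ rules out before separation. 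Once that is isolated, the remaining arguments are routine applications of minimality of the rotation and the base-$d$ metric.
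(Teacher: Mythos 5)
Your proof is correct and follows essentially the same route as the paper's: the shift relation and continuity arguments are identical, and your seam analysis for monotonicity --- that $0$ can enter the moving arc only after the node $t_m = -\theta_0$ has, which is excluded before the first separation --- is just the arc-language version of the paper's observation that $\tau$ is increasing on the interior of each $J_k$. The only organizational difference is that you extract injectivity as a corollary of strict monotonicity, whereas the paper proves injectivity first (via Kronecker/density of the rotation orbit) and then uses it to obtain the first differing digit.
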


\begin{proof}
        Kronecker's theorem also implies injectivity of $E$. Let $\omega$ and $\omega'$
	be distinct points in $\mathfrak D_0$. Set $\delta = \omega - \omega'$
	and note that there must be an $s$ with the property that $s$ and $s' =
	s + \delta$ lie in the interior of different $J_i$'s.  We may then
	choose $n \in \N_0$ with the property that $\tau^n \omega$ and $\tau^n
	\omega' = \tau^n \omega + \delta$ approximate $s$ and $s'$,
	respectively.  If the error is sufficiently small then $\tau^n\omega$
	and $\tau^n\omega'$ will be in different $J_i's$.  Hence the $d$-adic
	encoding of $\omega$ and $\omega'$ are different. Since neither of
	these encodings can end with an infinite string of $d-1$'s, $E(\omega)
	\ne E(\omega')$. 

        Equation \ref{eqn:dynMap} follows directly from equation \ref{eqn:about_E} and
	the ensuing discussion. 

        Let $\omega \in \mathfrak D_0$ and $\omega_n$ a sequence in $\mathfrak D_0$
	that converges to $\omega$.  Then, since $\tau^k \omega$ is in the
	interior of one of the $J_i$, $\tau^k \omega_n$ must eventually have
	this property as well.  Thus, $E$ must be continuous. 
	
        It only remains to prove that $E$ is monotone increasing.  Let $\omega, \omega'
	\in \mathfrak{D}_0$ with $\omega < \omega'$. Write $E(\omega) =
	a_0a_1a_2...$ and $E(\omega') = a'_0a'_1a'_2...$.  Because $E$ is
	injective,  there is a  first index $i \ge 0$ for which $a_i \ne a'_i$.
	If $i = 0$, $\omega < \omega'$ implies $a_0 < a'_0$. This in turn
	yields $E(\omega) < E(\omega')$. If $i > 0$, then $a_{i-1} = a'_{i-1}$
	entails that both $\tau^{i-1}\omega$ and $\tau^{i-1}\omega'$ are in
	interior of the same $J_k$. Since $\tau$ is increasing on the interior
	of any $J_n$, we must have $a_i < a'_i$. As a consequence, $E(\omega) <
	E(\omega')$ in this case as well.
\end{proof}

Write $\mu = E_\ast\mathcal{L}$ for the image of Lebesgue measure on $\T$ under $E$. 
By equation \ref{eqn:dynMap}, $\mu$ is invariant under $T$. Since, $E$ is injective 
and $\mathcal{L}$ is absolutely continuous, $\mu$ has no point masses and hence is 
absolutely continuous. 

Write $X_0$ for the image of $\mathfrak D_0$ under $E$ and set $X = \overline{X_0}$. 
Since $X_0$ is invariant under $T:\T \to \T$, so is its closure $X$. The Borel measure 
$\mu$ is supported on $X$. Define a map $\Phi: X \to \T$ by
\[
	\Phi(x) = \mu([\alpha,x])
\]
where $\alpha = \min X$. If $x = E t$ for some $t \in \mathfrak D_0$, 
\begin{align*}
	\Phi(x) & = \mu([\alpha,x]) \\
		& = \mathcal{L}( E^{-1} [\alpha,x]) \\
		& = \mathcal{L}( [0,t]) = t
\end{align*}
It follows that $E:\mathfrak D_0 \to X_0$ and $\Phi|_{X_0}$ are inverses. 
Thus $\Phi$ is an invariant map from $X_0$ to $\mathfrak D_0$. 
The continuity of $\Phi$ then implies that it is invariant on $X$ as well.

\begin{thm}
	The dynamical system $(X,T)$ is rotational.
\end{thm}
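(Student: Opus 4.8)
The set $X = \overline{X_0}$ has already been shown to be $T$-invariant, so it remains to verify the two other conditions in the definition of a rotational set: that $T|_X$ preserves cyclic order, and that $(X,T)$ is minimal. The plan is to establish both properties first on the dense subset $X_0 = E(\mathfrak D_0)$, where the order-preserving conjugacy $E$ lets one transport everything from the rotation $\tau$, and then to promote them to the closure $X$.

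For cyclic order, recall that $E: \mathfrak D_0 \to X_0$ is a monotone increasing bijection intertwining $\tau$ and $T$ (equation \ref{eqn:dynMap}). Since an increasing map preserves the cyclic order of triples and the rotation $\tau$ preserves cyclic order on $\T$, the composition shows that $T$ preserves the cyclic order of any triple in $X_0$. To pass to $X$, I would take $P, Q, R \in X$ with distinct images --- note these are then automatically distinct --- and approximate them by triples in $X_0$. Because the orientation of a nondegenerate triple is locally constant, and because $T$ is continuous so that the image triple is also nondegenerate for close enough approximants, the common orientation survives in the limit. This step is routine once the nondegeneracy bookkeeping is in place.

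For minimality, let $Y \subseteq X$ be nonempty, closed and $T$-invariant. Its image $\Phi(Y)$ is then nonempty, closed and forward-invariant under $\tau$, since $\Phi \circ T = \tau \circ \Phi$; because every forward orbit of an irrational rotation is dense in $\T$, this forces $\Phi(Y) = \T$. The decisive point is that $\Phi$ is injective over $\mathfrak D_0$: for every $\omega \in \mathfrak D_0$ the fiber $\Phi^{-1}(\omega) \cap X$ is the single point $E\omega$. Granting this, surjectivity of $\Phi|_Y$ forces $E\omega \in Y$ for all $\omega \in \mathfrak D_0$, so $X_0 \subseteq Y$, and closedness gives $X = \overline{X_0} \subseteq Y$. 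Hence $Y = X$, and $(X,T)$ is minimal.

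The main obstacle is the singleton-fiber claim, i.e.\ that $(X,T)$ is an almost one-to-one extension of $(\T,\tau)$. I expect to argue it from monotonicity of $\Phi$ together with the fact that $X$ is the support of $\mu$: density of $\mathfrak D_0$ and the continuity and monotonicity of $E$ show that each $E\omega$ with $\omega \in \mathfrak D_0$ is approached by points of $X_0$ from both sides, so it is not an endpoint of a gap. If some $x \ne E\omega$ satisfied $\Phi(x) = \omega$, then monotonicity of $\Phi$ would force $\mu$ to vanish on the arc between $x$ and $E\omega$, making that arc a gap with $E\omega$ as an endpoint --- contradicting the previous sentence. Verifying that $X = \mathrm{supp}\,\mu$, equivalently that every nonempty relatively open subset of $X$ has positive $\mu$-measure, is the one auxiliary fact I would record before running this argument; it follows since $\mu = E_\ast \mathcal{L}$, $E$ is injective, and $\mathcal{L}$ assigns positive measure to every nonempty open arc.
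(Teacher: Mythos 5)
Your proposal is correct, and the cyclic-order half runs along the same lines as the paper's (the paper deduces it from the monotonicity of $\Phi$ and the fact that $\tau$ preserves cyclic order; you work from the $E$ side on $X_0$ and then pass to the closure by a limiting argument, which amounts to the same thing). The minimality half, however, takes a genuinely different route. The paper argues pointwise: for any $x \in X$, the forward $\Phi$-orbit of $x$ eventually lies in $\mathfrak D_0$ (by the proposition preceding the encoding $E$), and then density of the irrational rotation orbit together with continuity of $E$ makes $\{T^nx\}$ dense in $X_0$ and hence in $X$. You instead take an arbitrary nonempty closed invariant $Y \subseteq X$, push it down to a closed forward-invariant subset of $(\T,\tau)$, conclude $\Phi(Y)=\T$, and lift back up using the fact that $\Phi$ has singleton fibers over $\mathfrak D_0$. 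The cost of your route is that you must prove that fiber statement, which you do correctly via the positivity of $\mu$ on nonempty relatively open subsets of $X$ (using $\mu = E_\ast\mathcal L$ and the monotonicity and injectivity of $E$); the benefit is that you get, essentially for free, the almost one-to-one extension property that appears as part (ii) of the main theorem, and you make fully explicit a point the paper's orbit-density argument glosses over --- namely that $\Phi(T^nx) \in \mathfrak D_0$ really does force $T^nx = E(\Phi(T^nx)) \in X_0$, which is exactly the singleton-fiber claim. Both arguments are sound; yours is somewhat longer but more self-contained at that step.
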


\begin{proof} 
	We can now prove the minimality of $(X,T)$. Let $x \in X$ and
	write $t = \Phi(x)$.  We would like to prove that the orbit of $x$ is
	dense in $X$. We know that there is a positive integer $N$ with the
	property that for all $n \ge N$, $\Phi(T^n x) = t + n \theta_0 \mod 1$
	is in $\mathfrak D_0$. The continuity of $E$ together with the density
	of the set $\{t + n \theta_0: n \ge N \}$ in $\T$ implies that $\{T^n
	x: n \ge N\}$ is dense in $X_0$ and hence in $X$. Consequently, $(X,T)$
	is minimal.  Finally, since $(\T,\tau)$ preserves cyclic order and
	$\Phi$ is monotonic, it is a easy to check that $(X,T)$ preserves
	cyclic order.  It follows that $(X,T)$ is rotational and that
	$(\T,\tau)$ is its canonical factor.
\end{proof}

\section*{Examples for a class of continuous maps}

In this section, we show that infinite rotational sets exist in a certain class
of continuous transformations on $\T$. Members of this class can have
arbitrarily many fixed points and hence will not, in general, be conjugate to
the transformations treated in the previous two sections.

Here as before, we parametrize $\T$ by the half-open interval $[0,1[$. 
Fix an integer $d > 1$, and let 
\[
0 = x_0 < x_1 < \cdots < x_d = 1
\]
be a partition of the unit interval. Consider a continuous transformation $T: \T \to
\T$ that, for each $k = 0,...,d-1$, satisfies
\begin{itemize}
	\item[i.] monotone increasing function on each half-open interval $[x_k,x_{k+1})$, and 
	\item[ii.] $T(x_k) = 0$ and $\lim_{x\to x_{k+1}^-} T(x) = 1$.
\end{itemize}

We set up a standard symbolic encoding for $T$ in terms of infinite strings 
in the alphabet $\mathcal{A} = \{0, 1,...,d-1\}$. In particular, for each
finite (non-empty) word $I = i_1i_2...i_n$, let
\[
	A_I = \{ x \in [0,1[\  : \forall k = 1,...n, f^{k-1}(x) \in [x_{i_k},x_{i_k+1}[ \ \}.
\]

We collect some useful remarks that are easy to check. 

\begin{rem}
	\begin{itemize}
		\item[i.] The $A_I$ are half-open intervals that are closed on the left.
		\item[ii.] If the finite word $I$ is a prefix for the word $J$, then $A_J \subset A_I$.
		\item[iii.] For any fixed $I$ and $n \ge |I|$, the collection 
			\[
				\{ A_J: |J| = n \ \text{and\ $I$ is a prefix for $J$}\ \}
			\]
			is a partition of $A_I$.
		\item[iv.] If $I = i_1i_2...i_n$ and $I' = i_2...i_n$, then 
			\[
				x \in A_I \implies Tx \in A_{I'}.
			\]
		\item[v.] If $I$ precedes $J$ in lexicographical order, then 
			\[
				x \in A_I\ \text{and}\ y \in A_J \implies x < y.
			\]
	\end{itemize}
\end{rem}

Each point $x \in \T$ determines a unique infinite word $\iota(x) \in \mathcal{A}^{\N}$. 
Since, by construction,
\begin{equation}\label{eqn:factmap}
	\iota \circ T = S \circ \iota
\end{equation}
we have that $\mathcal{A}^{\N}$ is a Borel measurable factor of $(\T,T)$.

\begin{rem}\label{rem:lexord}
The lexicographical ordering on $\mathcal{A}^{\N}$ and the usual order on
$[0,1[$ are also compatible with the factor map $\iota$. In particular, for any
$x, y \in [0,1[$, 
\begin{itemize}
	\item[i.] $x < y$ implies $\iota(x) \le \iota(y)$, and
	\item[ii.] $\iota(x) < \iota(y)$ implies $x < y$.
\end{itemize}
\end{rem}

\begin{lem}\label{lem:symbcoding}
	If $I \in \mathcal{A}^{\N}$ 
	doesn't terminate with an infinite sequence of $d-1$'s, there is an $x
	\in [0,1[$ with the property that $\iota(x) = I$.
\end{lem}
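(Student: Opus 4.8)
The plan is to realize the required $x$ as a point common to all the cylinder sets cut out by the prefixes of $I$. Write $I = i_1 i_2 \cdots$ and $I_n = i_1 \cdots i_n$. By the remarks above, the sets $A_{I_n}$ are nested, $A_{I_1} \supseteq A_{I_2} \supseteq \cdots$, and each is a nonempty interval that is closed on the left and open on the right, say $A_{I_n} = [\ell_n, r_n[$; nonemptiness follows from the hypotheses on $T$, since $T$ restricts to an increasing continuous bijection of each $[x_k, x_{k+1}[$ onto $[0,1[$, so every $A_J$ is a nonempty subinterval. By the definition of $\iota$ and of $A_{I_n}$, a point $x$ lies in $A_{I_n}$ precisely when the first $n$ symbols of $\iota(x)$ are $i_1 \cdots i_n$. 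Hence any $x \in \bigcap_n A_{I_n}$ satisfies $\iota(x) = I$, and it suffices to show this intersection is nonempty.

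Nesting makes $\ell_n$ nondecreasing and $r_n$ nonincreasing; set $\ell = \sup_n \ell_n$ and $r = \inf_n r_n$, so that $\ell \le r$. I will take $x = \ell$. Since $\ell \ge \ell_n$ for every $n$, the membership $\ell \in A_{I_n} = [\ell_n, r_n[$ comes down to the strict inequality $\ell < r_n$; as $\ell \le r$, it is enough to prove that $r < r_n$ for every $n$, i.e. that the infimum of the right endpoints is never attained.

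The key is to relate the right endpoints to the digit $d-1$. Using the partition remark, $A_{I_n}$ is the disjoint union of its $d$ children $A_{I_n 0}, \ldots, A_{I_n(d-1)}$, each nonempty and, by the order-compatibility remark, arranged strictly from left to right; being left-closed and right-open and tiling $A_{I_n}$, consecutive children share an endpoint. Since $A_{I_{n+1}} = A_{I_n i_{n+1}}$, its right endpoint $r_{n+1}$ equals $r_n$ exactly when $i_{n+1} = d-1$ (the rightmost child) and is strictly smaller when $i_{n+1} < d-1$ (the nonempty children to its right fill out $[r_{n+1}, r_n[$). The hypothesis that $I$ does not end in an infinite run of $d-1$'s means $i_{n+1} < d-1$ for infinitely many $n$, so $r_n$ drops strictly infinitely often. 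Thus for each fixed $n$ some later index $m > n$ has $r_m < r_n$, whence $r \le r_m < r_n$. This gives $\ell \le r < r_n$ for all $n$, so $\ell \in \bigcap_n A_{I_n}$ and $\iota(\ell) = I$.

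I expect the one genuinely delicate point to be the right-openness of the cylinders: a nested intersection of left-closed, right-open intervals can be empty, and that failure is exactly what an all-$(d-1)$ tail produces --- the symbolic analogue of the base-$d$ ambiguity $0.\overline{d-1}$. The work therefore concentrates on the equivalence ``$r_{n+1} = r_n \iff i_{n+1} = d-1$'' and on checking that the drop is strict; the interval structure and nonemptiness of the $A_J$ are routine from the stated remarks and from the fact that $T$ maps each partition piece bijectively onto $[0,1[$.
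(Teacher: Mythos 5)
Your proof is correct and follows essentially the same route as the paper's: the paper notes that a digit $i_m < d-1$ forces $\overline{A_{I_m}} \subset A_{I_n}$ for some $m > n$, so $\bigcap_n A_{I_n} = \bigcap_n \overline{A_{I_n}}$ is nonempty by the nested closed interval property --- which is exactly your observation that the right endpoints $r_n$ drop strictly infinitely often. The only cosmetic difference is that you exhibit the point explicitly as $\sup_n \ell_n$ instead of appealing to compactness.
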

\begin{proof}
	Let $I = i_1i_2....$ and set $I_n = i_1...i_n$ for each $n \in
	\N$. It is enough to show that
	\[
		\bigcap_{n=1}^\infty A_{I_n} \ne \emptyset.
	\]
	The hypothesis entails that for any $I_n$ there is an $m > n $ with the
	property that $\overline{A_{I_m}} \subset A_{I_n}$.  This in turn means
	that 
	\[
		\bigcap_{n=1}^\infty A_{I_n} = \bigcap_{n=1}^\infty \overline{A_{I_n}}.
	\]
	The claim follows since a decreasing sequence of bounded, closed
	intervals must have a non-empty intersection.
\end{proof}

\begin{lem}\label{lem:contsymb}
	Let $x_0 \in \T$ be a point with the property that $\iota(x_0)$ doesn't
	terminate with an infinite sequence of $0$'s. Then $x_0$ is a point of
	continuity for the map $\iota: \T \to \mathcal{A}^{\N}$.
\end{lem}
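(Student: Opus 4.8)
The plan is to reduce continuity at $x_0$ to a statement about the left endpoints of the cylinder sets $A_{I_n}$, and then to show that a point sitting at such a left endpoint necessarily has an itinerary ending in an infinite block of $0$'s. Write $\iota(x_0) = i_1 i_2 i_3 \cdots$ and let $I_n = i_1 \cdots i_n$ denote its length-$n$ prefix. Since a neighborhood basis at $\iota(x_0)$ in the product topology is given by the cylinders fixing the first $n$ symbols, and since the preimage under $\iota$ of such a cylinder is exactly $A_{I_n}$, the map $\iota$ is continuous at $x_0$ if and only if $x_0$ lies in the interior (in $\T$) of every $A_{I_n}$. Now by the first of the collected remarks each $A_{I_n}$ is an interval closed on the left, say $A_{I_n} = [c_n, e_n)$, and $x_0 \in A_{I_n}$ gives $c_n \le x_0 < e_n$. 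Thus $x_0$ always has room on the right, and $x_0$ can fail to be interior only when $x_0 = c_n$ for some $n$. (Because $\iota(0)$ is the all-zero word, the hypothesis forces $x_0 \neq 0$, so the identification $0 \equiv 1$ plays no role in this local analysis.)

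The middle step is to locate these left endpoints. First I would observe that the branch $T|_{[x_{i_1}, x_{i_1+1})}$ is an increasing bijection onto $[0,1)$, so it carries the cylinder $A_{I_n}$ onto $A_{i_2 \cdots i_n}$ while sending left endpoint to left endpoint. Iterating this observation, $T^{n-1}$ maps $c_n$ to the left endpoint of the length-one cylinder $A_{i_n} = [x_{i_n}, x_{i_n+1})$, which is the node $x_{i_n}$. Hence $x_0 = c_n$ forces $T^{n-1}(x_0) = x_{i_n}$; that is, the orbit of $x_0$ strikes a partition node at time $n-1$.

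Finally I would invoke condition (ii) on $T$: since $T(x_k) = 0$ for every node and $T(0) = 0$, once the orbit reaches $x_{i_n}$ at time $n-1$ it lands on $0$ at time $n$ and stays there, so $T^m(x_0) = 0 \in [x_0, x_1)$ for all $m \ge n$ and $\iota(x_0)$ ends in an infinite string of $0$'s. Reading this contrapositively gives the lemma: if $\iota(x_0)$ does not so terminate, then $x_0$ is never a left endpoint $c_n$, hence $x_0 \in (c_n, e_n) = \operatorname{int} A_{I_n}$ for every $n$, and $\iota$ is continuous at $x_0$. I expect the only delicate point to be the bookkeeping in the middle step --- confirming that the monotone branches really do push the left endpoint of $A_{I_n}$ forward to the node $x_{i_n}$ --- together with the observation that the left-closed shape of the cylinders is exactly what pins the sole possible discontinuity to the left endpoint.
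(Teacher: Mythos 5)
Your proof is correct and takes essentially the same route as the paper: reduce continuity at $x_0$ to showing $x_0$ lies in the interior of every cylinder $A_{I_n}$, which can only fail at a left endpoint. The one difference is that you actually justify the key implication (left endpoint of $A_{I_n}$ $\Rightarrow$ the orbit hits a node, then sticks at $0$, so the itinerary ends in $0$'s), which the paper's proof asserts without argument --- a worthwhile addition.
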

\begin{proof}
	Write $I = i_1i_2...$ for $\iota(x_0)$ and let $I_n =
	i_1...i_n$. The hypothesis entails that $x_0$ does not lie on the
	left endpoint of any of the $A_{I_n}$. In other words, $x_0$ is in the
	interior of all the $A_{I_n}$. Suppose $x_k \to x_0$ as $k \to \infty$.
	For any $n \in \N$, $x_k \in A_{I_n}$ for all sufficiently large $k$.
	For such $k$, $\iota(x_k)$ will have $I_n$ as a prefix. 
\end{proof}

In view of the analysis of the previous section, we may select an infinite
rotational subset $X \subset \T$ for the transformation $T_0: x \mapsto d\cdot
x \mod 1$ that is an extension for an irrational rotation of $\T$ by $\theta_0
\in \R/\Z$.  By mapping each point of $X$ to its $d$-adic expansion, we may
embed $X$ continuously in $\mathcal{A}^{\N}$.  (The transformation $T_0$ is
just the restriction of the standard shift map on $\mathcal{A}^{\N}$ to $X$.

Since no point of $X$ has a $d$-adic expansion that terminates in an infinite
sequence of $d-1$'s, there is an $a \in \T$ with the property that $\iota(a)
\in X$.  The orbit of $a$, 
\[
	\mathcal{O}(a) = \{ T^i a: i \ge 0 \}
\]
is invariant under $T$. By remark \ref{rem:lexord} and equation
\ref{eqn:factmap}, $T$ preserves cyclic order.  One can check that the same facts
extend to the closure $Y = \overline{\mathcal{O}(a)}$. 

\begin{lem}
	Every point of $Y$ is a point of continuiuty for $\iota$.
\end{lem}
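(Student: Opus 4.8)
The plan is to reduce everything to Lemma~\ref{lem:contsymb}: it suffices to show that for every $y \in Y$ the code $\iota(y)$ does not terminate in an infinite string of $0$'s. In fact I would prove the stronger statement that $\iota(Y) \subseteq X$, where $X$ is regarded as the set of $d$-adic codes of the rotational set for $T_0$. The two properties of this symbolic set that I will use are that it is closed in $\mathcal{A}^{\N}$ (being the continuous image of the compact set $X$) and that no code in $X$ terminates either in an infinite string of $(d-1)$'s (as already noted in the construction) or in an infinite string of $0$'s (such a code would be a terminating $d$-adic expansion, i.e.\ a preimage of the fixed point $0$, and these do not meet $X$). To begin, I would record that the codes along the orbit already lie in $X$: from the intertwining relation in equation~\ref{eqn:factmap} and the shift-invariance of $X$ one gets $\iota(T^i a) = S^i \iota(a) \in X$ for every $i \ge 0$, so $\iota(\mathcal{O}(a)) \subseteq X$. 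The content of the lemma is therefore entirely about the limit points of $\mathcal{O}(a)$, where the possible discontinuities of $\iota$ must be confronted.

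Next I would analyze the one-sided behaviour of $\iota$, exploiting that by Remark~\ref{rem:lexord} the map $\iota$ is monotone nondecreasing into the lexicographically ordered space $\mathcal{A}^{\N}$. Since each cylinder $A_I$ is an interval closed on the left, every point $y$ lies strictly to the left of the right endpoint of each cylinder $A_{i_1\cdots i_n}$ containing it; consequently $\iota$ is right-continuous at every point. The left limit $w^- = \sup_{x<y}\iota(x)$ always exists by monotonicity, and a short analysis of the cylinder structure shows that either $\iota$ is left-continuous at $y$ (so $w^- = \iota(y)$), or $y$ is the left endpoint of some cylinder $A_{i_1\cdots i_N}$, in which case $w^-$ is obtained by the usual base-$d$ carry and terminates in an infinite string of $(d-1)$'s.

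With these tools in hand, fix $y \in Y$ and choose $T^{i_k} a \to y$, passing to a monotone subsequence. If the approach is from the right (or eventually stationary), right-continuity gives $\iota(T^{i_k}a) \to \iota(y)$; since the codes $\iota(T^{i_k}a)$ lie in the closed set $X$, we conclude $\iota(y) \in X$. If the approach is strictly from the left, then $\iota(T^{i_k}a) \to w^- \in X$. When $\iota$ is left-continuous at $y$ this again yields $\iota(y) = w^- \in X$; when it is not, $w^-$ terminates in an infinite string of $(d-1)$'s, contradicting the second property of $X$ recorded above. Hence $\iota(y) \in X$ in every case, and since no code of $X$ ends in an infinite string of $0$'s, Lemma~\ref{lem:contsymb} shows that $y$ is a point of continuity of $\iota$.

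The main obstacle is the left-limit analysis of the second paragraph: making precise that a left-discontinuity of $\iota$ forces a tail of $(d-1)$'s in $w^-$. Two technical points demand care. First, at a partition node $y = x_j$ the first digit already jumps, and one must track the carry correctly across the common prefix. Second, the all-zero cylinder $Z = \bigcap_{k} A_{0^k}$ may be a nondegenerate interval when $T$ has a fixed point in $[0,x_1)$, so that a trailing string of $0$'s no longer characterizes the left endpoints of the cylinders; I expect to dispatch the interior points of $Z$ by observing that they would force the forbidden code $000\cdots$ into the closed set $X$, and hence cannot occur in $Y$.
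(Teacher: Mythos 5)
Your argument is correct, but it follows a genuinely different route from the paper's. The paper makes the same reduction to Lemma~\ref{lem:contsymb} and then argues dynamically: if some $\iota(y)$ ended in an infinite string of $0$'s, shifting would produce a point of $Y$ with code $000\cdots$, whose entire orbit is pinned at the bottom of the lexicographic order; combining this with an order-reversing pair $u,v\in X$ (supplied by the rotational structure of $X$, \emph{i.e.} the gap $[\alpha',\beta']$), the density of the orbit of $\iota(a)$ in $X$, Remark~\ref{rem:lexord}, and the fact that $T$ preserves cyclic order on $Y$, one exhibits a triple $y$, $T^na$, $T^ma$ whose cyclic order is not preserved. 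You instead prove the stronger statement $\iota(Y)\subseteq X$ by a purely order-topological analysis of the coding map: right-continuity of $\iota$ everywhere, identification of the left limit at any left-discontinuity as a carry code ending in $(d-1)^\infty$, and closedness of the symbolic copy of $X$, which excludes both $0^\infty$- and $(d-1)^\infty$-tails because $0\notin X$ and $X$ is invariant. Your route makes no use of cyclic-order preservation on $Y$ (a fact the paper only asserts in passing), treats the two forbidden tails symmetrically, and the conclusion $\iota(Y)\subseteq X$ immediately streamlines the paper's next step, where $\iota(Y_0)=X$ is derived for a minimal $Y_0$; the price is the one-sided limit analysis you correctly flag as the technical core. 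Note that your worry about the all-zero cylinder being a nondegenerate interval is harmless there: the carry description of the left limit only uses that the rightmost subcylinder $A_{J(d-1)}$ shares its right endpoint with $A_J$, which holds regardless of whether cylinders degenerate.
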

\begin{proof}
	By lemma \ref{lem:contsymb}, it suffices to show that for every $y \in
	Y$, $\iota(y)$ does not end with an infinite sequence of $0$'s. If this
	is not the case then by applying $T$ sufficiently many times we obtain
	a point $y \in Y$ for which $\iota(y) = 000...$. In particular,
	the orbit of $y$ lies in the set $\iota^{-1}(000...)$. The
	analysis in the previous section implies that there exist two points
	$u,v \in X$ whose order is reversed by $S$. Since  $\iota(a)$ has dense
	orbit in $X$, there are integers $n, m \in \N$ such that $\iota(T^na)$
	and $\iota(T^ma)$ are so close to $u$ and $v$ that their order is
	reversed by $S$ as well. By remark \ref{rem:lexord}, $T^na$ and $T^ma$
	have thier orders reversed by $T$. Consequently, $T$ does not preserve
	the cyclic order of $y, T^na$ and $T^ma$. This contradiction proves
	that every point of the closed set $Y$ is a point of continuity of
	$\iota$. 
\end{proof}

Now let $Y_0$ be a minimal subset of the dynamical system $(Y,T)$. Since
$\iota|_{Y_0}$ is continuous, $\iota(Y_0)$ is a compact and invariant in the
minimal set $X$. Thus, $\iota(Y_0) = X$. Since $X$ is an infinite factor of
$Y_0$, $Y_0$ is not finite. With a little more care, it is not hard to show
that the rotation number of any $y \in Y_0$ with resprect to $T$ is $\theta$. 

It is natural to ask at this point if rotational subsets of $\T$ always exist
with respect to any continuous transformation of the circle with degree $d > 1$.

\bibliography{../mybib}{}
\bibliographystyle{plain}

\end{document}